\documentclass[12pt]{amsart}

\usepackage{latexsym,amsmath,amssymb,amsthm,amsfonts}

\usepackage[shortalphabetic]{amsrefs}

\usepackage[capitalise]{cleveref}

\usepackage{multido,pst-plot,pstricks,pst-node}
\textheight=23cm \textwidth=17cm \hoffset=-2cm \voffset=-1cm

\linespread{1.48}

\newcommand{\R}{{\mathbb{R}}}
\newcommand{\eps}{\varepsilon}

\newcommand{\U}{{\mathcal{U}}}
\renewcommand{\L}{{\mathcal{L}}}
\renewcommand{\P}{{\Pi}}
\newcommand{\Q}{{\mathcal{Q}}}
\newcommand{\A}{{\mathbb{A}}}

\newcommand{\T}{{\mathcal{T}}}
\newcommand{\x}{{\boldsymbol{x}}}

\newcommand{\bxi}{{\boldsymbol{\xi}}}

\newcommand{\y}{{\boldsymbol{y}}}

\newcommand{\z}{{\boldsymbol{z}}}
\renewcommand{\t}{{\boldsymbol{t}}}
\newcommand{\zero}{{\boldsymbol{0}}}

\newcommand{\wt}{\widetilde}

\newcommand{\intr}{{\rm int}}

\renewcommand{\phi}{{\boldsymbol{\varphi}}}

\newcommand{\qtq}[1]{{\quad\text{#1}\quad}}

\newtheorem{theorem}{Theorem}[section]
\newtheorem{lemma}[theorem]{Lemma}
\newtheorem{proposition}[theorem]{Proposition}

\newtheorem{corollary}[theorem]{Corollary}

\theoremstyle{remark}
\newtheorem{remark}[theorem]{Remark}
\newtheorem{example}[theorem]{Example}

\numberwithin{equation}{section}

\title{Geometric computation of Christoffel functions on planar convex domains}

\author{A.\ Prymak}
\address{Department of Mathematics, University of Manitoba, Winnipeg, MB, R3T 2N2, Canada}
\email{Andriy.Prymak@umanitoba.ca}
\thanks{The author was supported by Natural Sciences and Engineering Research Council of Canada Discovery Grant RGPIN 05357-20.}

\keywords{Christoffel function, algebraic polynomials, orthogonal polynomials, boundary effect}

\subjclass[2010]{42C05, 41A17, 41A63, 26D05, 42B99}

\begin{document}

\begin{abstract}
For an arbitrary planar convex domain, we compute the behavior of Christoffel function up to a constant factor using comparison with other simple reference domains. The lower bound is obtained by constructing an appropriate ellipse contained in the domain, while for the upper bound an appropriate parallelogram containing the domain is constructed.

As an application we obtain a new proof that every planar convex domain possesses optimal polynomial meshes.
\end{abstract}

\maketitle

\section{Introduction}\label{sec:intr}

Let $D\subset\R^d$ be a compact set with non-empty interior, $\P_{n,d}$ be the space of real algebraic polynomials of total degree $\le n$ in $d$ variables. Equip $D$ with Lebesgue measure and let $\{p_j\}_{j=1}^N$ be an orthonormal basis of $\P_{n,d}$ with respect to the inner product $\langle f,g\rangle=\int_D fg \,d\x$, $N=\dim(\P_{n,d})=\binom{n+d}{d}$. Christoffel function associated with $D$ is then
\begin{equation}\label{eqn:classic def}
\lambda_n(\x,D):=\left(\sum_{j=1}^N p_j(\x)^2\right)^{-1}.
\end{equation}
A useful equivalent definition is
\begin{equation}\label{eqn:def_lambda}
\lambda_n(\x,D)=\min_{f\in\P_{n,d},\, |f(\x)|=1} \|f\|_{L^2(D)}^2, \quad \x\in D.
\end{equation}
For the purposes of this work we restricted the definition of Christoffel function to the case of Lebesgue measure on $D$, which is also crucial for the methods used.

Christoffel function is instrumental in different areas of approximation theory, analysis, mathematics and other disciplines, see, e.g.~\cite{Ne} or~\cite{Si}. Typically, \emph{asymptotics} of Christoffel function is established showing that for any fixed point $\x$ in the interior of $D$ one has $\lim_{n\to\infty} n^d \lambda_n(\x,D)=\Psi(\x)$ for an explicit or estimated function $\Psi(\x)$, in which case the decay of $\Psi(\x)$ when $\x$ is close to the boundary of $D$ is of particular interest. We establish \emph{behavior} of Christoffel function, i.e., for any $n$ and arbitrary $\x\in D$ we calculate $\lambda_n(D,\x)$ up to a constant factor independent of $n$ and $\x$. This implies estimates of $\Psi(\x)$ (provided it exists) and is useful in applications where $n$ is fixed while $\x$ varies. For example, it was shown in~\cite{Di-Pr} that the rate of growth of $\sup_{\x\in D}(\lambda_n(D,\x))^{-1}$ as $n\to\infty$ is determining for Nikol'skii type inequalities on $D$. The quantity $\sup_{\x\in D}(\lambda_n(D,\x))^{-1}$ is also important for discretization problems, see, e.g.~\cite{Co-Da-Le} and~\cite{DPTT}. Pointwise behavior of $\lambda_n(\x,D)$ with fixed $n$ arises in~\cite{Co-Gi}, which is the weighted analog of~\cite{Co-Da-Le}.

For specific domains, such as simplex, cube or ball, an orthonormal basis can be computed and~\eqref{eqn:classic def} can be used to find Christoffel function, see, e.g.~\cite{Xu}. This is no longer feasible if $D$ is a rather general multivariate domain. % (although there are some special cases [[Dunkl Xu?]]). 
A different approach pioneered in~\cite{Kr} is to use~\eqref{eqn:def_lambda} and compare with other domains for which the behavior of Christoffel function is known. In~\cite{Kr} lower and upper estimates of Christoffel function on convex and starlike domains were obtained in terms of Minkowski functional of the body. In the context of application to Nikol'skii inequality (i.e. estimates of the quantity $\sup_{\x\in D}(\lambda_n(D,\x))^{-1}$), the comparison method was used in~\cite{Di-Pr}, where it was shown that for convex $D$ it suffices to compute the infimum over $\x$ in the boundary of $D$. In~\cite{Pr} we obtained upper estimates on Christoffel function for convex domains in terms of few easy-to-measure geometric characteristics of the location of $\x$ inside $D$. The estimates were obtained comparing $D$ with a parallelotop containing $D$. This was followed by the lower estimates in~\cite{Pr-U1} obtained by comparison with ellipsoids contained in $D$. 
As a consequence, in~\cite{Pr-U1} the behavior of Christoffel function was computed for $\{(x_1,x_2):|x_1|^\alpha+|x_2|^\alpha\le1\}$ if $1<\alpha<2$, and it was suggested that the class of convex bodies for which the lower bounds of~\cite{Pr-U1} and the upper bounds of~\cite{Pr} match (up to a constant factor) is rather large.

In this work, we establish characterization of the behavior of Christoffel function on {\it arbitrary planar convex domains} using comparison with ellipses contained in the domain for the lower bound and comparison with parallelograms containing the domain for the upper bound. This is achieved by an appropriate refinement of the ideas of~\cite{Pr} and~\cite{Pr-U1}. The core of this work is almost purely geometric result presented in~\cref{sec:geom}. The proofs are constructive and explicitly describe required ellipse and parallelogram. Then we compute Christoffel function for arbitrary planar convex domain and give relevant remarks about both geometric and analytic parts in~\cref{sec:comp}. We conclude the work with~\cref{sec:mesh} where existence of optimal polynomial meshes on arbitrary planar convex domains is established as a consequence of the main result of the paper. For general planar convex domains this was recently proved in~\cite{Kr19}. Our proof is different and suggests another approach for the higher dimensions where the problem is still open.

\section{Main geometric result}\label{sec:geom}

Let us begin with the necessary notations. $\|\cdot\|$ is the Euclidean norm in $\R^2$, $\zero=(0,0)$, $B=\{\x\in\R^2:\|\x\|\le1\}$ is the unit ball in $\R^2$,  $S:=[0,1]^2$ is the unit square, $(\x)_i$ is the $i$-th coordinate of $\x$, $\partial D$ is the boundary of $D\subset\R^2$, $\intr(D):=D\setminus \partial D$. Let $\A$ be the collection of all non-degenerate affine transforms of $\R^2$, i.e. $\T\in\A$ if $\T(\cdot)=\z+A\cdot$ for some $\z\in\R^2$ and invertible $2\times2$ matrix $A$, in which case we set $\det \T:=\det A$. Denote $\A_1:=\{\T\in\A:\det \T=1\}$. If there is no ambiguity, we may omit parentheses around the arguments of affine transforms to unclutter notation. 

Recall that a set in a Euclidean space is convex if and only if the segment joining two arbitrary points from the set completely belongs to the set. Further, a function is convex if and only if its epigraph is a convex set. If $f:[a,b]\to\R$ is convex, then the one-sided derivatives $f_-'(x)$ and $f'_+(x)$ exist and are non-decreasing on $(a,b)$, with $f'_-(x)\le f'_+(x)$, $x\in(a,b)$. For our purposes, it will be convenient to say that $f:[a,b]\to\R$ is convex if, apart from convexity of the epigraph, $f$ is continuous at $a$ and $b$ while $f'_+(a)$ and $f'_-(b)$ exist and are finite. Then we also set $f'_-(a):=f'_+(a)$, $f'_+(b):=f'_-(b)$ and write $f'_\pm(x)$ if the involved property is satisfied for both $f'_+(x)$ and $f'_-(x)$. For further background on convexity and convex functions, the reader may refer to~\cite{Ro}. %[[we will use certain specific properties without reference?]]

%[[update $a$, $b$ below to something else]]

Let $D\subset\R^2$ be a convex body, i.e. convex compact set with $\intr(D)\ne\emptyset$. For each $\x\in \intr(D)$, define
\begin{equation}\label{eqn:L def}
	L(\x,D):=\sup\{(1-\|\L^{-1}\x\|)^{1/2}|\det \L|: \L\in\A,\ \x \in \L B \subset D  \}
\end{equation}
and
\begin{equation}\label{eqn:U def}
	U(\x,D):=\inf\{((\U^{-1}\x)_1(\U^{-1}\x)_2)^{1/2}|\det \U|: \U\in\A,\ \x\in \U(\tfrac12 S),\ D \subset \U S \}.
\end{equation}

In geometric terms, the supremum in $L(\x,D)$ seeks an ellipse $\L B$ inside in $D$ having a ``large'' area ($|\det\L|$ factor) while containing $\x$ ``far'' from the boundary of the ellipse ($1-\|\L^{-1}\x\|$ factor). The infimum in $U(\x,D)$ searches for a parallelogram $\U S$ containing $D$ having a ``small'' area while the point $\x$ should be ``close'' to the sides of the parallelogram ($(\U^{-1}\x)_i$ is essentially the normalized distance from $\x$ to the $i$-th sides of the parallelogram, while the condition $\x\in\U(\tfrac12 S)$ can always be achieved for a fixed parallelogram by choosing one of the four possible transforms $\U$). In the above extrema, the geometric quantities involved are combined in a specific way that subsequently applies naturally to Christoffel functions. Our main geometric result is that  $U(\x,D)$ can be controlled by $L(\x,D)$.

%[[$\L$ and $\U$ to be used throughout the proof ]]

\begin{theorem}\label{thm:geom}
	For any planar convex body $D$ and any interior point $\x\in D$
	\begin{equation}\label{eqn:geom bound}
		U(\x,D)\le c L(\x,D),
	\end{equation}
	where $c$ is an absolute constant.
\end{theorem}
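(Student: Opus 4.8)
The plan is to remove the infimum and supremum in \eqref{eqn:U def} and \eqref{eqn:L def} by exhibiting a single good parallelogram and a single good ellipse. Since $U(\x,D)$ is an infimum and $L(\x,D)$ a supremum, it suffices to construct one $\U\in\A$ with $\x\in\U(\tfrac12 S)$ and $D\subset\U S$, and one $\L\in\A$ with $\x\in\L B\subset D$, for which
\[
	\bigl((\U^{-1}\x)_1(\U^{-1}\x)_2\bigr)^{1/2}|\det\U|\le c\,(1-\|\L^{-1}\x\|)^{1/2}|\det\L|.
\]
I would first record that \eqref{eqn:geom bound} is affine covariant: for $\T\in\A$ the maps $\L\mapsto\T\L$ and $\U\mapsto\T\U$ are bijections of the two admissible families that fix $\L^{-1}\x$ and $\U^{-1}\x$ and multiply the determinants by $\det\T$, whence $L(\T\x,\T D)=|\det\T|\,L(\x,D)$ and likewise for $U$. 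Hence the inequality is unchanged under an affine normalization, which I will use to place $\x$ and $D$ in a convenient position before constructing $\U$ and $\L$.

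Next I would set up the construction from common geometric data. Translate so that $\x=\zero$, choose a direction $\e_2$ in which $\x$ is closest to $\partial D$ and a supporting direction $\e_1$ of $D$ at that nearest boundary point, and apply a linear map making $\e_1,\e_2$ the coordinate axes with the two chords through $\x$ of comparable length. Let $a_i,b_i>0$ be the distances from $\x$ to $\partial D$ along $\pm\e_i$. The circumscribing parallelogram $\U S$ is taken with one pair of sides on the supporting line at the nearest point and its parallel support on the far side, and the other pair on the two supports parallel to $\e_2$; this determines $\U$ and gives $\U^{-1}\x=(s,t)\in[0,\tfrac12]^2$ explicitly in terms of the $a_i,b_i$. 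The inscribed ellipse $\L B$ is taken tangent to the same near supporting line and fitted inside $D$ using the chord data, with $\x$ in its interior, giving $\L^{-1}\x$ and $|\det\L|$ explicitly as well. Convexity enters here through the supporting lines and the finiteness of the one-sided slopes $f'_\pm$, used to guarantee that $\U S$ really contains $D$, that $\x$ lands in the inner quarter $\U(\tfrac12 S)$, and that the transversal extent of $D$ is controlled so the ellipse fits with a definite radial gap.

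The comparison then splits into two independent parts. Both determinants are comparable to each other (each being of the order of the product of the two chord lengths through $\x$), so these factors cancel up to an absolute constant. It remains to prove the boundary-weight bound $(\U^{-1}\x)_1(\U^{-1}\x)_2\le c'(1-\|\L^{-1}\x\|)$. By the adapted choice of axes one factor, say $(\U^{-1}\x)_2$, is comparable to the relative nearest-boundary distance, while $(\U^{-1}\x)_1\le\tfrac12$; the ellipse is arranged so that its radial gap $1-\|\L^{-1}\x\|$ reproduces that same nearest-boundary distance up to a constant, and the remaining order-one transversal factor is absorbed into $c$.

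The main obstacle is precisely this last matching, because the two boundary weights are structurally different: a product of two linear factors for the parallelogram against a single radial quantity for the ellipse. Making them comparable robustly forces the directions and the ellipse to be read off the same local data, so that exactly the genuinely small linear factor is the one reproduced by the ellipse's radial gap. The delicate regimes are when $D$ is highly elongated near $\x$, or when $\x$ is close to a vertex of $D$, where both $s$ and $t$ are small; there one must use convexity quantitatively, combining supporting lines with bounds on $f'_\pm$, to produce an inscribed ellipse whose radial gap has the correct order while keeping the circumscribing parallelogram tight. I expect essentially all the genuine work of \cref{thm:geom} to sit in carrying out this construction so that both comparisons hold simultaneously with a single absolute constant.
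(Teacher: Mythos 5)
Your overall strategy --- affine normalization followed by exhibiting a single admissible ellipse and a single admissible parallelogram built from local data at the boundary point nearest to $\x$ --- is the same as the paper's, and your affine-covariance observation is correct. The genuine gap is in the construction of the parallelogram and in the mechanism you propose for matching the two weights. You place the second pair of sides of $\U S$ on the two supporting lines parallel to the nearest-boundary direction $\e_2$, assert that $|\det\U|$ and $|\det\L|$ are both of the order of the product of the two chord lengths through $\x$, and then discard the transversal factor via $(\U^{-1}\x)_1\le\tfrac12$. All three claims fail on the paper's own \cref{exm:trapezoid}: for the trapezoid $D_a$ with vertices $(\pm a,0)$, $(\pm1,1)$ and $\x=(0,\delta)$, your adapted frame is the coordinate frame, your parallelogram is (an affine copy of) the bounding box $[-1,1]\times[0,1]$, and the resulting bound is $U(\x,D_a)\le c\sqrt{\delta}$, while in fact $U(\x,D_a)\approx L(\x,D_a)\approx\sqrt{\delta(\delta+a)}$ --- off by the unbounded factor $(\delta+a)^{-1/2}$. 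Moreover the admissible ellipse with radial gap $\approx\delta$ has $|\det\L|\lesssim\sqrt{\delta+a}$ while the bounding box has determinant $\approx1$, so the determinants are not comparable (only the full products are), and the transversal factor $(\U^{-1}\x)_1$ must genuinely be small rather than merely $\le\tfrac12$. Fitting the ellipse ``using the chord data'' is also insufficient: the horizontal chord through $\x$ has length $\approx\delta+a$, whereas the correct horizontal semi-axis is $\approx\sqrt{\delta+a}$, which can be much larger.

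The missing idea is the one carried by \cref{lem:repres D} and \cref{lem:main}. After a shear making the supporting line at the nearest boundary point horizontal with $f'_+(0)=0$, one takes the \emph{lowest} parabola $y=\tfrac{\delta}{2}+kx^2$ lying above the boundary graph $f$; its leading coefficient $k$ determines the inscribed ellipse (of width $\approx k^{-1/2}$, not the chord length), and the second pair of sides of the circumscribing parallelogram is placed on the supporting line $\ell(x)=\alpha x-\beta$ to $f$ \emph{at the tangency point $\xi$ of that parabola} --- not on supports parallel to $\e_2$. This parallelogram is genuinely slanted even when $D$ is symmetric about the $\e_2$-axis, and the entire matching rests on the inequality $\sqrt{\delta+\beta}/|\alpha|\le k^{-1/2}$ of \eqref{eqn:bound on parameters}, which converts the parallelogram weight $\approx\sqrt{(\delta+\beta)\delta}/\alpha$ into the ellipse weight $\approx\sqrt{\delta/k}$. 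You correctly predict that ``all the genuine work'' sits in this matching, but the construction you specify cannot carry it out, because the quantity that must be small transversally is $\delta+\beta$ measured against the slope $\alpha$ of a second, tangentially chosen supporting line --- data that a bounding box in the adapted frame never sees.
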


\begin{proof}
	Clearly $U({\Q}\x,\Q D)=U(\x,D)$ and $L({\Q}\x,\Q D)=L(\x,D)$ for any ${\Q}\in \A$, so due to John's theorem on inscribed ellipsoid of the largest volume~\cite{Sc}*{Th.~10.12.2, p.~588}, we can assume without loss of generality that 
	\begin{equation}\label{eqn:John}
	B\subset D\subset 2B.
	\end{equation}
	To continue the proof we require two lemmas, the first of which will provide us with a convenient representation of $D$ in relation to a fixed point $\x\in\intr(D)$ which is close to $\partial D$. % [[we might also need that distance from $x$ to $\partial D$ is at least $c\delta$]] 
	\begin{lemma}\label{lem:repres D}
		Suppose $\x\in\intr(D)$, $\x\ne\zero$, and $\delta>0$ are such that $(1+\delta/\|\x\|)\x\in \partial D$. Then
		there exist a convex function $f:[-1,1]\to[0,\frac13]$ such that $f(0)=f'_+(0)=0$ and $|f'_\pm(x)|\le 2$ for $x\in[-1,1]$, and an affine transform ${\Q}\in\A$ with $\det {\Q}=3$ such that ${\Q}\x=(0,\delta)$,
		\begin{equation}\label{eqn:Q restr}
			({\Q}D) \cap ([-1,1]\times[0,\tfrac13]) = \{(x,y):-1\le x\le 1,\ f(x)\le y\le \tfrac13\},
		\end{equation}
		and 
		\begin{equation}\label{eqn:Q restr boundary}
			({\Q}\partial D) \cap ([-1,1]\times[0,\tfrac13]) = \{(x,y):-1\le x\le 1,\ y=f(x)\}.
		\end{equation}
	\end{lemma}
	\begin{proof}
		At first, we do not worry about the condition $f'_+(0)=0$ and construct $\wt {\Q}\in \A_1$ and $\tilde f$ satisfying similar properties.
		Set $\z=(1+\delta/\|\x\|)\x$ and choose $\wt {\Q}\in\A_1$ as the transform satisfying $\wt {\Q}\z=\zero$, $(\wt {\Q}(-\x))_1=0$ and $(\wt {\Q}(-\x))_2>0$. Note that $\wt {\Q}$ is uniquely defined as the composition of the translation moving $\z$ to the origin and the rotation mapping the direction of $-\x$ to the direction of the positive $y$-axis. Clearly, $\wt {\Q}\x=(0,\delta)$. With $l:=\|\z\|$, due to $B\subset D\subset 2B$, we have $1\le l\le 2$, $B+(0,l)\subset \wt {\Q}D\subset 2B+(0,l)$, and there exists a convex function $\tilde f:[-1,1]\to[l-2,l]$ such that
		\begin{equation}\label{eqn:wt Q restr}
			(\wt {\Q}D) \cap ([-1,1]\times[l-2,l]) = \{(\tilde x,\tilde y):-1\le \tilde x\le 1,\ \tilde f(\tilde x)\le \tilde y\le l\}
		\end{equation}
		and
		\begin{equation}\label{eqn:wt Q restr boundary}
			(\wt {\Q}\partial D) \cap ([-1,1]\times[l-2,l]) = \{(\tilde x,\tilde y):-1\le \tilde x\le 1,\ \tilde y=\tilde f(\tilde x)\}.
		\end{equation}		
		Evidently, $\tilde f(0)=0$. We now estimate $\tilde f'_\pm(0)$. By convexity,
		\begin{equation*}
			\tilde f'_\pm(0)\le \frac{\tilde f(1)-\tilde f(0)}{1-0} \le l \le 2
		\end{equation*}
		and arguing similarly in the other direction, we obtain 
		\begin{equation}\label{eqn:tilde f prime at 0}
			|\tilde f'_\pm(0)|\le 2.
		\end{equation}
		For any $x\in[-1/3,1/3]$ we get in the same way
		\begin{equation*}
			\tilde f'_\pm(x)\le \frac{\tilde f(1)-\tilde f(x)}{1-x} \le \frac{l-(l-2)}{2/3} =3
		\end{equation*}
		and so
		\begin{equation}\label{eqn:tilde f prime at x}
		|\tilde f'_\pm(x)|\le 3, \quad x\in[-1/3,1/3].
		\end{equation}
		It remains to apply an appropriate linear transform to ensure $f'_+(0)=0$ and the required range of $f$. Set
		\begin{equation*}
			{\Q}:=\begin{pmatrix}
			3&0\\ -\tilde f'_+(0) &1
			\end{pmatrix} \wt {\Q}
			\qtq{and}
			f(x):=\tilde f\left(\frac x3\right)-\frac{\tilde f'_+(0)}3x.
		\end{equation*}		
		Simple verification shows $f(0)=0$ and $f'_+(0)=0$, implying 
		\begin{equation}\label{eqn:f ge 0}
			f(x)\ge0 \qtq{for any} x\in[-1,1].
		\end{equation}
		Further, by~\eqref{eqn:tilde f prime at x} and~\eqref{eqn:tilde f prime at 0}
		\begin{equation*}
		|f'_\pm(x)|\le \frac13 \left|\tilde f'_\pm\left(\frac x3\right)\right|+\frac{|\tilde f'_\pm(0)|}3<2, \quad x\in[-1,1].
		\end{equation*}
		Set %$x=3\tilde x$ and $y=- \tilde f'(0)\tilde x + \tilde y $,
		\begin{equation}\label{eqn:Qxy}
			\begin{pmatrix}
			x \\ y
			\end{pmatrix} = 
			\begin{pmatrix}
			3&0\\ -\tilde f'_+(0) &1
			\end{pmatrix}
			\begin{pmatrix}
			\tilde x \\ \tilde y
			\end{pmatrix}			
		\end{equation}
		and assume that $x\in[-1,1]$. Then the inequality $f(x)\le y$ is equivalent to $\tilde f(\tilde x)\le \tilde y$ and the corresponding equalities are equivalent as well. Also, $\tilde y\le l$ is equivalent to $y\le l-\tilde f'_+(0)\frac x3$, which by $l\ge1$ and~\eqref{eqn:tilde f prime at 0} yields $y\le\frac13$. We have $y\ge0$ due to~\eqref{eqn:f ge 0}. Taking the above into account, \eqref{eqn:Q restr} and~\eqref{eqn:Q restr boundary} follow from~\eqref{eqn:wt Q restr}, \eqref{eqn:wt Q restr boundary} and the definition of ${\Q}$.
	\end{proof}

	\begin{remark}\label{rem:diam bound}
		Recalling~\eqref{eqn:John}, by the definition of ${\Q}$ from the proof of~\cref{lem:repres D}, one can easily see that $\wt {\Q}D\subset (2+\delta)B\subset 4B$. Next, by~\eqref{eqn:Qxy} and~\eqref{eqn:tilde f prime at 0}
		\begin{equation*}
		x^2+y^2\le 9 \tilde x^2+4\tilde x^2+4|\tilde x\tilde y|+\tilde y^2\le 15(\tilde x^2+\tilde y^2),
		\end{equation*}
		so ${\Q}D\subset 16B$, which we will need later.
	\end{remark}

	Now we proceed to the second lemma, which contains the key auxiliary result. For $f$ from \cref{lem:repres D}, we will build a parabola bounding $f$ from above while being below $(0,\delta)$ (see~\eqref{eqn:parabola above}), and two supporting lines to $f$ which will be used to construct the required parallelogram: one is $y=0$ and the other one will be given by $\ell$ (see~\eqref{eqn:support}). The supporting lines need to be ``close'' to $(0,\delta)$, which is automatic for $y=0$ and is quantified in~\eqref{eqn:bound on parameters} for $\ell$.

	\begin{lemma}\label{lem:main}
		Suppose  $f:[-1,1]\to[0,\frac13]$ is  a convex function such that $f(0)=f'_+(0)=0$ and $|f'_\pm(x)|\le 2$ for $x\in[-1,1]$. Assume, in addition, that $0<\frac\delta2<f(-1)+f(1)$. Then there exist $k>0$, $\xi\in[-1,1]\setminus\{0\}$, and a linear function $\ell(x)=\alpha x-\beta$ with $|\alpha|,\beta\in(0,2]$, such that
		\begin{gather}
			\label{eqn:parabola above}
			f(x)\le \frac \delta2+ k x^2 \qtq{for all} x\in[-1,1], \\
			\label{eqn:support}
			\ell(\xi)=f(\xi), \quad \ell'(\xi)=f'_-(\xi) \text{ or } \ell'(\xi)=f'_+(\xi), \qtq{and}  \\
			\label{eqn:bound on parameters}
			\frac{\sqrt{\delta+\beta}}{|\alpha|} < \frac1{\sqrt{k}}.
		\end{gather}
	\end{lemma}
	\begin{proof}
		Define
		\begin{equation}\label{eqn:k def}
		k:=\inf\left\{\tilde k>0: \frac{\delta}{2}+\tilde kx^2\ge f(x),\ x\in[-1,1]\right\},		
		\end{equation}
		which is well-defined due to $\frac\delta2<f(-1)+f(1)$. By continuity, the infimum is attained and there exists $\xi\in[-1,1]$ such that $\frac\delta2+k\xi^2=f(\xi)$. By symmetry, we can assume $\xi\in(0,1]$. Define $\ell$ setting $\alpha=f'_-(\xi)$ and $\beta=\xi f'_-(\xi)-k\xi^2-\frac\delta2$, so that~\eqref{eqn:support} is satisfied. 
		Denote $p(x)=\frac\delta2+kx^2$, $x\in\R$. By convexity of $f$, $\ell(x)\le f(x)\le p(x)$ for $0\le x\le \xi$, so by $\ell(\xi)=p(\xi)$ we have $\ell'(\xi)\ge p'(\xi)$ implying $f'_-(\xi)\ge 2k\xi$ and $\alpha>0$. Since $\ell(\xi)=f(\xi)=p(\xi)>0$ and $f(0)=f'(0)=0$, convexity of $f$ also implies $\beta=f(0)-\ell(0)>0$, which means $\frac\delta2< \xi f'_-(\xi)-k\xi^2$. Using this inequality and $f'_-(\xi)\ge 2k\xi$, we now establish~\eqref{eqn:bound on parameters} as follows:
\[
\frac{\sqrt{\delta+\beta}}{\alpha} = \frac{\sqrt{\xi f'_-(\xi)-k\xi^2+\frac\delta2}}{f'_-(\xi)}
<  \frac{\sqrt{2(\xi f'_-(\xi)-k\xi^2)}}{f'_-(\xi)} < \frac{\sqrt{2\xi f'_-(\xi)}}{f'_-(\xi)} = \sqrt{\frac{2\xi}{f'_-(\xi)}} \le \frac{1}{\sqrt{k}}.
\]
It only remains to note that $\alpha=f'_-(\xi)\le2$ by the hypothesis, and $\beta<\xi f'_-(\xi)\le2$.
	\end{proof}
	
	We can finally continue with the actual proof of~\eqref{eqn:geom bound}. There will be three cases that we need to consider. %[[if $T^{-1}\x\in\frac12S$ is ensured prior to case 3?]]
	
	Case 1. $\x+\frac18B\subset D$. Then taking ${\L}(\cdot)=\x+\frac18(\cdot)$ in~\eqref{eqn:L def}, we get $L(\x,D)\ge \frac1{64}$. Recalling~\eqref{eqn:John} and taking ${\U}(\cdot)=\x+8(\cdot-(\frac12,\frac12))$ in~\eqref{eqn:U def}, we obtain $U(\x,D)\le 32$ and hence \eqref{eqn:geom bound}. 
	
	If $\x+\frac18B\not\subset D$, then we apply \cref{lem:repres D} and use the notations of the lemma. Observe that $\delta<\frac14$. Indeed, otherwise $\t:=(1+\frac1{4\|\x\|})\x\in D$, so by $B\subset D$ of~\eqref{eqn:John} and convexity,
	\[
	\frac{\frac14}{\frac14+\|\x\|}(B-\t)+\t=\frac{\frac14}{\frac14+\|\x\|}B+\x\subset D.
	\]
	This is a contradiction to $\x+\frac18B\not\subset D$ because $D\subset 2B$ of~\eqref{eqn:John} implies $\frac14+\|\x\|=\|\t\|\le 2$.
	
	We can also establish a useful bound on $U({\Q}\x,{\Q}D)$ using \cref{rem:diam bound}. Convexity, $f(0)=f'_+(0)=0$ and~\eqref{eqn:Q restr boundary} imply ${\Q}D\subset \R\times[0,\infty)$. In combination with \cref{rem:diam bound} this gives ${\Q}D\subset [-16,16]\times [0,16]$, so by~\eqref{eqn:U def} with ${\U}(x,y)=(32(x-\frac12),16y)$ we get 
	\begin{equation}\label{eqn:general upper bound}
	U({\Q}\x,{\Q}D)\le 64\sqrt{2\delta}.
	\end{equation} 
	
	Case 2. $\x+\frac18B\not\subset D$ and $\frac\delta2\ge f(-1)+f(1)$. In this case, using $\delta<\frac14$ and~\eqref{eqn:Q restr}, we see that
	\[
	[-1,1]\times \left[\frac{5\delta}{6},\frac{5\delta}{6}+\frac1{12}\right]
	\subset [-1,1]\times \left[\frac\delta2,\frac13\right]
	\subset {\Q}D.
	\]
	Consider ${\L}(x,y)=(x,\frac1{24}y+\frac{5\delta}{6}+\frac1{24})$. Then ${\L}B\subset {\Q}D$, ${\L}^{-1}{\Q}\x={\L}^{-1}(0,\delta)=(0,-1+\frac{\delta}{4})$, so by~\eqref{eqn:L def}, we obtain $L({\Q}\x,{\Q}D)\ge \frac{1}{48}\sqrt{\delta}$. Using~\eqref{eqn:general upper bound} in the other direction, \eqref{eqn:geom bound} follows by affine-invariance of $L$ and $U$.

	Case 3. $\x+\frac18B\not\subset D$ and $\frac\delta2< f(-1)+f(1)$. We apply \cref{lem:main} and use the notations of that lemma. By symmetry, we can assume that $\alpha>0$ and $\xi>0$. 
	
	It is immediate to verify that
	%\begin{equation}\label{eqn:ellipse above parabola}
	$x^2\le 1-\sqrt{1-2x^2}$ provided  $|x|\le\frac1{\sqrt{2}}$,
	%\end{equation}
	which means that for $\wt {\L}(x,y)=(\frac{1}{\sqrt{2}}x,y+1)$ the ellipse $\wt {\L}B$ is above the graph of $y=x^2$ touching this parabola at the origin. Hence, setting $k':=\max\{k,1\}$, we observe that for ${\L}(x,y)=(\frac{1}{\sqrt{2k'}}x,\frac{1}{12}(y+1)+\frac{2\delta}{3})$ the ellipse ${\L}B$ is above the graph of $y=kx^2+\frac{\delta}{2}$. Moreover, the largest second coordinate of ${\L}B$ is $\frac16+\frac{2\delta}{3}<\frac13$, so taking~\eqref{eqn:Q restr} and $k'\ge1$ into account, we see that ${\L}B\subset {\Q}D$. We compute ${\L}^{-1}{\Q}\x={\L}^{-1}(0,\delta)=(0,-1+\frac{\delta}{4})$. Therefore, by~\eqref{eqn:L def} we get 
	\begin{equation}\label{eqn:lower bound with k}
	L({\Q}\x,{\Q}D)\ge \frac{1}{24\sqrt{2}}\sqrt{\frac{\delta}{k'}}.
	\end{equation} 
	If $k<1$, \eqref{eqn:geom bound} follows by combining~\eqref{eqn:lower bound with k} with~\eqref{eqn:general upper bound}. Hence, in what follows we assume $k'=k\ge1$.
	
	Now we construct an appropriate affine transform for the upper bound on $U({\Q}\x,{\Q}D)$. Define
	\[
	{\U}(x,y)=(\wt x,\wt y)=\left(-\frac{\beta+16\alpha+16}{\alpha}x+\frac{16}{\alpha}y+\frac{\beta}{\alpha},16y\right).
	\]
	It is straightforward to verify that the line $y=0$ is mapped to $\wt y=0$, $y=1$ is mapped to $\wt y=16$, $x=0$ is mapped to $\wt y=\alpha\wt x-\beta$, while $x=1$ is mapped to the line parallel to $\wt y=\alpha\wt x-\beta$ passing through the point $(-16,16)$. In particular,
	\[
	\{(\wt x,\wt y): 0\le \wt y\le 16,\ \wt x\ge -16,\ \wt y\ge \alpha\wt x-\beta\}\subset {\U}S.
	\] 
	Therefore, by $f(0)=f'_+(0)=0$, \eqref{eqn:bound on parameters}, \cref{rem:diam bound} and convexity, we get $D\subset {\U}S$. As ${\Q}\x=(0,\delta)$, we compute
	\[
	{\U}^{-1}{\Q}\x=\left(\frac{16\delta+\beta}{16\alpha+16+\beta},\frac{\delta}{16}\right)
	\]
	which belongs to $\frac12S$ due to $\delta<\frac14$ and $\alpha,\beta\le2$. Noting that $(({\U}^{-1}{\Q}\x)_1({\U}^{-1}{\Q}\x)_2)^{1/2}<\frac14 \sqrt{(\delta+\beta)\delta}$, by~\eqref{eqn:U def} and  $\alpha,\beta\le2$, we get
	\[
	U({\Q}\x,{\Q}D)\le 4 \frac{\beta+16\alpha+16}{\alpha} \sqrt{(\delta+\beta)\delta} \le \frac{200 \sqrt{(\delta+\beta)\delta}}{\alpha}.
	\]
	This inequality, \eqref{eqn:lower bound with k} and~\eqref{eqn:bound on parameters} imply~\eqref{eqn:geom bound}.
\end{proof}

%\begin{remark}
%	\label{rem:UL calculation}
%[[unnecessary?]]	Summarizing the above proof, under the assumption $B\subset D\subset 2B$, for any $\x\in\intr(D)$ such that $\x(1+\delta/\|\x\|)\in\partial D$, $\delta>0$ (for convenience setting $\delta=1$ when $\x=\zero$), we have shown that
%	\begin{equation}\label{eqn:UL formula}
%	U(\x,D)\approx L(\x,D)\approx \sqrt{\frac{\delta}{k'}}.
%	\end{equation}  
%\end{remark}

\section{Computation of Christoffel function}\label{sec:comp}

In this section we show how to use \cref{thm:geom} to compute, up to a constant factor, Christoffel function on arbitrary planar convex domain at any point. Our main result is the following reduction of computation of Christoffel function to that of computation of the geometric quantities $U$ and $L$ defined in the previous section. We write $c$, $c_1$, $c_2$, $\dots$ to denote positive absolute constants, possibly different despite the same notation used. We write $F\approx G$ if $c^{-1}G\le F\le cG$.
\begin{theorem}\label{thm:christoffel computation}
	Suppose $D$ is a convex compact set satisfying $B\subset D\subset 2B$. For any $n\ge1 $ and arbitrary $\x\in D$ define $\tau_n(\x):=\x$ if $\x\in (1-2^{-4}n^{-2}) D$, and $\tau_n(\x):=t\x$ where $t>0$ is the largest scalar satisfying $t\x\in (1-2^{-4}n^{-2}) D$. Then	\begin{equation}\label{eqn:christoffel equivalence}
	\lambda_n(\x,D)\approx n^{-2} L(\tau_n(\x),D) \approx n^{-2} U(\tau_n(\x),D).
	\end{equation}
\end{theorem}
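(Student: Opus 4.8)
The plan is to deduce \cref{thm:christoffel computation} by combining \cref{thm:geom} with two independent analytic comparison principles: a lower bound for $\lambda_n$ coming from ellipses inscribed in $D$ and an upper bound coming from parallelepipeds (here parallelograms) containing $D$. The key mechanism is affine covariance of the Christoffel function under the change of variables that maps a reference domain onto the inscribed or circumscribed body. First I would record the scaling identity: if $\T\in\A$ with $\T(\cdot)=\z+A\cdot$, then an orthonormal basis of $\P_{n,2}$ on $\T D$ pulls back to $|\det\T|^{-1/2}$ times a basis on $D$, so from~\eqref{eqn:def_lambda} one gets $\lambda_n(\T\x,\T D)=|\det\T|\,\lambda_n(\x,D)$. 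Next I would invoke the two known model estimates. For the ball $B$ one has the classical boundary-type behavior $\lambda_n(\x,B)\approx n^{-2}(1-\|\x\|+n^{-2})$, or in the form matching~\eqref{eqn:L def}, $\lambda_n(\x,B)\approx n^{-2}(1-\|\x\|)$ once $\x$ is at distance $\gtrsim n^{-2}$ from $\partial B$. For the square $S$ one has the product behavior $\lambda_n(\x,S)\approx n^{-2}\prod_{i=1}^2\big((\x)_i(1-(\x)_i)\big)^{1/2}$ away from the $n^{-2}$-boundary layer, which is the origin of the factor $((\U^{-1}\x)_1(\U^{-1}\x)_2)^{1/2}$ in~\eqref{eqn:U def}. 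These are the analytic inputs; I would cite~\cite{Kr},~\cite{Pr}, and~\cite{Pr-U1} for their precise statements.

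With these in hand the two outer comparisons are immediate from the definition~\eqref{eqn:def_lambda} via monotonicity of the $L^2$ norm. If $\x\in\L B\subset D$, then enlarging the domain of integration only increases $\|f\|_{L^2}$, hence $\lambda_n(\x,D)\ge\lambda_n(\x,\L B)=|\det\L|\,\lambda_n(\L^{-1}\x,B)\gtrsim n^{-2}(1-\|\L^{-1}\x\|)^{1/2}\cdot|\det\L|$ once we are outside the boundary layer; taking the supremum over admissible $\L$ gives $\lambda_n(\x,D)\gtrsim n^{-2}L(\x,D)$. Symmetrically, if $\x\in\U(\tfrac12 S)$ and $D\subset\U S$, then shrinking the domain decreases the norm, so $\lambda_n(\x,D)\le\lambda_n(\x,\U S)=|\det\U|\,\lambda_n(\U^{-1}\x,S)\lesssim n^{-2}((\U^{-1}\x)_1(\U^{-1}\x)_2)^{1/2}\cdot|\det\U|$; taking the infimum over admissible $\U$ yields $\lambda_n(\x,D)\lesssim n^{-2}U(\x,D)$. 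Chaining these with the purely geometric inequality $U\le cL$ of \cref{thm:geom} forces $\lambda_n(\x,D)\approx n^{-2}L(\x,D)\approx n^{-2}U(\x,D)$ at any point $\x$ that is sufficiently far from $\partial D$, and the point $\tau_n(\x)$ is defined precisely so that its distance to the boundary is controlled from below by a multiple of $n^{-2}$, legitimizing the use of the model estimates without boundary-layer corrections.

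The remaining work, and the place where I expect the real obstacle, is to pass from $\lambda_n(\tau_n(\x),D)$ back to $\lambda_n(\x,D)$ inside the boundary strip $\x\in D\setminus(1-2^{-4}n^{-2})D$. Here $\tau_n$ is the radial retraction onto the scaled body, and one must show $\lambda_n(\x,D)\approx\lambda_n(\tau_n(\x),D)$ with constants independent of $n$ and $\x$. The natural tool is a \emph{doubling/Remez-type} argument: any polynomial of degree $\le n$ cannot vary by more than a constant factor in $L^2$-mass between $\x$ and $\tau_n(\x)$ because these two points are separated by an affine-invariant distance comparable to the local ``polynomial pixel'' of size $n^{-2}$ in the radial direction. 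Concretely I would use the inscribed-ellipse picture near $\tau_n(\x)$: since $1-\|\L^{-1}\tau_n(\x)\|\gtrsim n^{-2}$, Markov-type control of $f$ along the short axis of $\L B$ shows $|f(\x)|\approx|f(\tau_n(\x))|$, and combining with the comparability of the competing domains gives the two-sided bound. This localization step is more delicate than the monotonicity comparisons because it genuinely requires a polynomial inequality rather than a pure set-inclusion, and handling it uniformly up to the corner-like behavior encoded in $U$ (the product structure) is the technical heart of the theorem; I would treat it by reducing, via \cref{lem:repres D}, to the normalized configuration $B\subset D\subset 2B$ and invoking the established model behavior on $B$ and $S$ within a fixed-size neighborhood of $\tau_n(\x)$.
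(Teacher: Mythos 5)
Your overall architecture coincides with the paper's: domain monotonicity of $\lambda_n$ plus the affine covariance $\lambda_n(\T\x,\T D)=|\det\T|\,\lambda_n(\x,D)$, model estimates on $B$ and $S$, \cref{thm:geom} to close the two-sided equivalence, and a Markov-type argument (\cref{lem:small move}, quoted from \cite{Pr}) for the radial retraction $\tau_n$. However, there is a genuine gap at the step you treat as immediate: you cannot simply ``take the supremum over admissible $\L$'' and ``the infimum over admissible $\U$''. The model estimates $\lambda_n(\z,B)\approx n^{-2}(1-\|\z\|)^{1/2}$ and $\lambda_n(\z,S)\le cn^{-2}((\z)_1(\z)_2)^{1/2}$ are valid only when $\z$ sits at depth $\gtrsim n^{-2}$ inside the reference domain, and the second one is genuinely \emph{false} when $(\z)_1\ll n^{-2}$, since $\lambda_n(\z,S)$ saturates near $\partial S$ while the right-hand side tends to $0$. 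The admissible families in \eqref{eqn:L def} and \eqref{eqn:U def} contain transforms for which $\L^{-1}\x$ is arbitrarily close to $\partial B$, resp.\ $(\U^{-1}\x)_i$ is arbitrarily small, and nothing in the definitions prevents the supremum/infimum from being (nearly) attained by such degenerate transforms. Knowing that $\tau_n(\x)$ is at distance $\gtrsim n^{-2}$ from $\partial D$ does not by itself give a depth bound in the normalized coordinates of a near-optimal $\L$ or $\U$.

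Supplying these depth bounds is precisely the bulk of the paper's proof, which you have skipped. For the lower bound one takes $\L(\cdot)=A(\cdot)+\y$ within a factor $2$ of the supremum and replaces it by $\wt\L(\cdot)=\frac{1}{2(1-2^{-7}n^{-2})}A(\cdot)+\frac{\x+\y}{2}$ (shrink by roughly one half and recenter towards $\x$); one checks $\wt\L B\subset D$ using $\x\in(1-2^{-4}n^{-2})D$ and convexity, that $\wt\L^{-1}\x=(1-2^{-7}n^{-2})\L^{-1}\x$ so the depth in $B$ is now at least $2^{-7}n^{-2}$, and that $(1-\|\wt\L^{-1}\x\|)^{1/2}|\det\wt\L|$ still dominates $\frac12 L(\x,D)$. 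For the upper bound one proves $(\U^{-1}\x)_i\ge 2^{-7}n^{-2}$ for a near-optimal $\U$: if $(\U^{-1}\x)_1\le\frac14$ and $D\subset\U([0,1]\times[0,\frac12])$ one could halve $\U$ and contradict near-optimality, so the strip $\U(\R\times[0,1])$ has width $h\le 8$, while $\x+2^{-4}n^{-2}B\subset D$ forces $\dist(\x,\U(\R\times\{0\}))\ge 2^{-4}n^{-2}$, whence $(\U^{-1}\x)_1=\rho/h\ge 2^{-7}n^{-2}$. Without these two constructions the chain of inequalities does not close. Two smaller points: your stated ball asymptotic $n^{-2}(1-\|\x\|)$ is missing the square root (the correct model is \eqref{eqn:ball behavior}); and the retraction step you single out as the ``technical heart'' is in the paper a one-line appeal to \cref{lem:small move} --- your instinct to use a Markov-type inequality there is right, but the real work lies in the near-optimal transform modifications above.
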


\begin{remark}\label{rem:John if needed}
	Due to John's theorem on inscribed ellipsoid of the largest volume~\cite{Sc}*{Th.~10.12.2, p.~588}, for any planar convex body $D$ there exists $\T\in\A$ such that $B\subset \T D\subset 2B$. One can easily track how Christoffel function changes under an affine transform by the upcoming~\eqref{eqn:affine}. Therefore, the hypothesis $B\subset D\subset 2B$ in \cref{thm:christoffel computation} can be ensured by considering an appropriate affine image of {\it arbitrary} planar convex body. Note that under this hypothesis we were able to achieve that the constants in the equivalences are absolute and independent of the geometry of the set.
\end{remark} 

\begin{remark}\label{rem:thm boundry}
	Certain special care is needed to formulate \cref{thm:christoffel computation} for points close to the boundary when $\tau_n(\x)\ne \x$. In fact, one can immediately see that it suffices to prove \cref{thm:christoffel computation} only for $\x$ satisfying $\tau_n(\x)=\x$ due to the next lemma relying on Markov's inequality.
\end{remark}

	\begin{lemma}[\cite{Pr}*{Proposition~1.4}] \label{lem:small move}
		If $D$ is a planar convex body with $\zero\in D$, then for any $\x\in D$
		\[
		\lambda_n(\x,D)\approx \lambda_n(\mu \x,D), \quad \mu\in[1-2^{-4}n^{-2},1].
		\]
	\end{lemma}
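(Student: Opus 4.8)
The plan is to pass to the reproducing-kernel form of the Christoffel function and reduce the statement to a one–dimensional logarithmic-derivative estimate along the ray from $\zero$ through $\x$. By the extremal characterization \eqref{eqn:def_lambda} one has $\lambda_n(\z,D)^{-1}=K_n(\z,\z)=\sum_{j=1}^N p_j(\z)^2$, where $K_n$ is the reproducing kernel of $\P_{n,2}$ for $\langle\cdot,\cdot\rangle_{L^2(D)}$. Since $\zero\in D$ and $\x\in D$, convexity gives $t\x\in D$ for all $t\in[0,1]$, so I set $g(t):=\lambda_n(t\x,D)^{-1}=\sum_{j}p_j(t\x)^2$, a nonnegative univariate polynomial of degree $\le 2n$ with $g>0$ on $[0,1]$. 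The claim $\lambda_n(\mu\x,D)\approx\lambda_n(\x,D)$ is exactly $g(\mu)\approx g(1)$, and I will obtain it by bounding $|(\log g)'|$ on $[\mu,1]$, an interval of length $1-\mu\le 2^{-4}n^{-2}$.

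Differentiating and applying Cauchy--Schwarz to $g'(t)=2\sum_j p_j(t\x)\,D_\x p_j(t\x)$, where $D_\x$ denotes the directional derivative $\nabla\!\cdot\x$, I get
\[
|g'(t)|\le 2\,g(t)^{1/2}\,\Lambda(t),\qquad \Lambda(t)^2:=\sum_j\big(D_\x p_j(t\x)\big)^2 .
\]
Interpreting $\{p_j\}$ as an orthonormal basis, $\Lambda(t)^2$ is the squared norm of the Riesz representer of the functional $Q\mapsto D_\x Q(t\x)$ on $\P_{n,2}$, so $\Lambda(t)^2=\sup\{|D_\x Q(t\x)|^2:\,Q\in\P_{n,2},\ \|Q\|_{L^2(D)}=1\}$, while analogously $g(t)=\sup\{|Q(t\x)|^2:\,\|Q\|_{L^2(D)}=1\}$. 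Hence $|(\log g)'(t)|\le 2\,\Lambda(t)\,g(t)^{-1/2}$, and everything reduces to a Markov-type comparison of these two extremal quantities.

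The crux is the Markov--Christoffel inequality $\Lambda(t)\le C n^2\, g(t)^{1/2}$ with an absolute constant $C$. I would prove it by restricting competitors $Q$ to the chord $\{s\x:\,s\x\in D\}\ni\zero$: writing $p(s):=Q(s\x)$, a univariate polynomial of degree $\le n$, one has $D_\x Q(s\x)=p'(s)$, and the classical Markov inequality controls $p'$ by $p$ on this chord with a factor of order $n^2$. Combining this with the two-variable $L^2$-normalization hidden in the definitions of $\Lambda$ and $g$, and normalizing the position of $D$ (via John's theorem \cite{Sc}, using that the assertion is invariant under origin-fixing linear maps, and noting that in the application inside \cref{thm:christoffel computation} one already has $B\subset D\subset 2B$), makes $C$ absolute. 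Granting this, integration over $[\mu,1]$ yields $|\log g(1)-\log g(\mu)|\le 2Cn^2(1-\mu)\le 2Cn^2\cdot 2^{-4}n^{-2}=C/8$, an absolute bound, whence $g(\mu)\approx g(1)$ and the lemma follows. As a consistency check, the lower direction $\lambda_n(\mu\x,D)\ge\mu^2\lambda_n(\x,D)\ge\tfrac12\lambda_n(\x,D)$ also drops out cheaply from $\mu D\subseteq D$ and the homogeneity of Lebesgue measure, so the substantive content is the upper bound, which the logarithmic-derivative estimate supplies.

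The main obstacle is precisely this Markov--Christoffel inequality with an \emph{absolute} constant of order $n^2$: one must ensure the chord-based Markov factor does not degenerate as the geometry of $D$ varies, which is where the normalization and the exact calibration $2^{-4}n^{-2}\cdot n^2=\text{const}$ enter. This is exactly the point at which the proof ``relies on Markov's inequality'' in the sense indicated for \cite{Pr}*{Proposition~1.4}.
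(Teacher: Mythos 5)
This lemma is not proved in the paper at all: it is quoted from \cite{Pr}*{Proposition~1.4}, with the only hint (in \cref{rem:thm boundry}) that the argument ``relies on Markov's inequality.'' So the comparison can only be with the cited external proof, which proceeds along quite different lines (the easy direction from $\mu D\subseteq D$ together with \eqref{eqn:compare}--\eqref{eqn:affine}, exactly as in your ``consistency check,'' and the substantive direction via the dilation $Q(\cdot)\mapsto Q(\cdot/\mu)$ reduced to a univariate $L^2$ extension estimate on chords through the origin, where the classical Markov/Chebyshev growth bound enters). Your proposal instead runs through the logarithmic derivative of the diagonal kernel $g(t)=K_n(t\x,t\x)$, and the preparatory steps (the Riesz-representer identities for $g$ and $\Lambda$, the Cauchy--Schwarz bound $|g'|\le 2g^{1/2}\Lambda$, the calibration $n^2\cdot 2^{-4}n^{-2}=\mathrm{const}$) are all correct.

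However, there is a genuine gap at what you yourself identify as the crux: the inequality $\Lambda(t)\le Cn^2 g(t)^{1/2}$ with an absolute constant. Your proposed justification --- restrict $Q$ to the chord and apply the classical Markov inequality to $p(s)=Q(s\x)$ --- does not deliver this. Markov's inequality bounds $|p'(t)|$ by $\tfrac{2n^2}{b-a}\max_{s\in[a,b]}|p(s)|$, and the best you can then say is $\max_s|p(s)|\le \bigl(\max_s g(s)^{1/2}\bigr)\|Q\|_{L^2(D)}$; this replaces the pointwise quantity $g(t)^{1/2}$ by the supremum of $g^{1/2}$ over the \emph{entire} chord. That supremum is attained where the chord meets $\partial D$, and the ratio $\max_s g(s)/g(t)$ is in general an unbounded power of $n$ (already for $D=B$ and $t\x$ near the centre, $g$ at the centre is of order $n^2$ while $g$ at distance $n^{-2}$ from the boundary is of order $n^3$), so the resulting bound on $|(\log g)'|$ is too weak by a positive power of $n$ and the final integration over $[\mu,1]$ no longer yields an $O(1)$ oscillation of $\log g$. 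No purely one-dimensional Markov- or Bernstein-type inequality can repair this, since all of them compare $p'(t)$ with a uniform norm of $p$ on the chord rather than with the local Christoffel bound at $t$; the inequality $\Lambda(t)\le Cn^2g(t)^{1/2}$ for arbitrary planar convex $D$ with an absolute $C$ is itself a sharp kernel-derivative estimate whose known proofs require essentially the pointwise behavior of $\lambda_n$ that this paper is in the business of establishing. As written, the proposal therefore reduces the lemma to an unproved statement that is at least as deep as the lemma itself; the elementary dilation route of \cite{Pr} avoids this entirely.
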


Before proving~\cref{thm:christoffel computation}, let us quickly establish the following corollary which will be crucial in the next section for existence of optimal polynomial meshes.
\begin{corollary}\label{cor:2n vs n}
	For any planar convex domain $D$, $\x\in D$ and $n\ge1$
	\begin{equation}\label{eqn:2n vs n}
	\lambda_{2n}(\x,D)\approx \lambda_n(\x,D).
	\end{equation}
\end{corollary}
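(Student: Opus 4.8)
The goal is to prove that $\lambda_{2n}(\x,D)\approx\lambda_n(\x,D)$, i.e.\ that doubling the polynomial degree changes the Christoffel function only by an absolute constant factor. The natural plan is to reduce this to \cref{thm:christoffel computation}, which expresses $\lambda_n(\x,D)$ in terms of the purely geometric quantity $L(\tau_n(\x),D)$ (up to a factor $n^{-2}$ and an absolute constant). By \cref{rem:John if needed} and the affine-invariance of both $\lambda_n$ (tracked by~\eqref{eqn:affine}) and of $L,U,\tau_n$, it suffices to treat a domain $D$ normalized so that $B\subset D\subset 2B$; I would first make this reduction explicit.

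Once normalized, applying~\eqref{eqn:christoffel equivalence} at degrees $n$ and $2n$ gives
\begin{equation*}
\lambda_n(\x,D)\approx n^{-2}L(\tau_n(\x),D),\qquad \lambda_{2n}(\x,D)\approx (2n)^{-2}L(\tau_{2n}(\x),D).
\end{equation*}
Since $(2n)^{-2}=\tfrac14 n^{-2}$, the factors $n^{-2}$ and $(2n)^{-2}$ are comparable up to the absolute constant $4$. Hence the whole claim reduces to showing that the two geometric quantities are comparable:
\begin{equation*}
L(\tau_{2n}(\x),D)\approx L(\tau_n(\x),D).
\end{equation*}
The issue is that $\tau_n$ and $\tau_{2n}$ push $\x$ to slightly different points: $\tau_n$ retracts along the ray from $\zero$ until reaching $(1-2^{-4}n^{-2})D$, while $\tau_{2n}$ retracts only to $(1-2^{-4}(2n)^{-2})D=(1-2^{-6}n^{-2})D$. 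The two scaling factors $\mu_n:=1-2^{-4}n^{-2}$ and $\mu_{2n}:=1-2^{-6}n^{-2}$ both lie in the interval $[1-2^{-4}n^{-2},1]$ relative to each other, so the targets $\tau_n(\x)$ and $\tau_{2n}(\x)$ differ by a radial scaling by a factor within $[1-2^{-4}n^{-2},1]$.

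The cleanest route to the remaining comparison is to bypass $L$ and argue directly on $\lambda_n$ via \cref{lem:small move}. Concretely, for a point $\y$ with $\tau_{2n}(\y)=\y$ (the only case that matters, by the analog of \cref{rem:thm boundry}) I would observe that $\tau_n(\y)=\mu\y$ for some $\mu\in[1-2^{-4}n^{-2},1]$, since retracting to the slightly smaller body $\mu_n D$ from a point already in $\mu_{2n}D$ requires only a radial factor in that range. Then \cref{lem:small move} yields $\lambda_n(\y,D)\approx\lambda_n(\tau_n(\y),D)$, and combining with~\eqref{eqn:christoffel equivalence} at both degrees closes the argument; the main obstacle is the careful bookkeeping that the relative scaling factor between $\tau_n(\x)$ and $\tau_{2n}(\x)$ indeed lies in $[1-2^{-4}n^{-2},1]$ so that \cref{lem:small move} applies, which requires using $B\subset D\subset 2B$ to relate the Euclidean displacement to the multiplicative retraction parameter.
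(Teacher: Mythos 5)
Your overall strategy is the same as the paper's: normalize so that $B\subset D\subset 2B$, note that when $\tau_n(\x)=\x$ (hence also $\tau_{2n}(\x)=\x$) the claim follows from \eqref{eqn:christoffel equivalence} at both degrees because $(2n)^{-2}=\tfrac14 n^{-2}$, and handle the remaining points by a radial move combined with \cref{lem:small move}. But the final step has a genuine gap. After reducing to a point $\y$ with $\tau_{2n}(\y)=\y$, the two instances of \eqref{eqn:christoffel equivalence} give $\lambda_{2n}(\y,D)\approx(2n)^{-2}L(\y,D)$ and $\lambda_n(\y,D)\approx n^{-2}L(\tau_n(\y),D)$, so what you still must supply is $L(\y,D)\approx L(\tau_n(\y),D)$, equivalently $\lambda_{2n}(\y,D)\approx\lambda_{2n}(\tau_n(\y),D)$. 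The one application of \cref{lem:small move} you make is at degree $n$ and yields only $\lambda_n(\y,D)\approx\lambda_n(\tau_n(\y),D)$, which \cref{thm:christoffel computation} already encodes; it says nothing about the degree-$2n$ side. That side is precisely where the difficulty lies: \cref{lem:small move} applied with degree $2n$ tolerates only $\mu\in[1-2^{-4}(2n)^{-2},1]=[1-2^{-6}n^{-2},1]$, while the radial factor carrying $\y$ to $\tau_n(\y)$ can be as small as $(1-2^{-4}n^{-2})/(1-2^{-6}n^{-2})$, which falls outside that interval. Your ``bookkeeping'' that the factor lies in $[1-2^{-4}n^{-2},1]$ is correct but beside the point --- that is the admissible range for degree $n$, not for degree $2n$.

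The paper closes exactly this gap by iterating \cref{lem:small move} at degree $2n$: it picks a positive integer $m$, independent of $n$, with $(1-2^{-4}(2n)^{-2})^m<1-2^{-4}n^{-2}$, so that $\tau_n(\x)$ is reached from $\x$ in at most $m$ admissible radial steps, each costing an absolute constant; this gives $\lambda_{2n}(\tau_n(\x),D)\approx\lambda_{2n}(\x,D)$ and finishes the argument. (The paper also reduces to the point $\tau_n(\x)$ rather than $\tau_{2n}(\x)$, which is the more convenient choice since that point is fixed by both $\tau_n$ and $\tau_{2n}$.) With this iteration added, your proof is complete.
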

\begin{proof}
	We can invoke the considerations of \cref{rem:John if needed} to assume $B\subset D\subset 2B$, so that \cref{thm:christoffel computation} is applicable. If $\tau_n(\x)=\x$, then also $\tau_{2n}(\x)=\x$, so~\eqref{eqn:2n vs n} follows directly from~\eqref{eqn:christoffel equivalence}. Otherwise, we have $\lambda_n(\tau_n(\x),D)\approx \lambda_n(\x,D)$ by~\cref{lem:small move}. It is easy to observe that there exists a positive integer $m$ independent of $n$ satisfying
	\[
	(1-2^{-4}(2n)^{-2})^m<1-2^{-4}n^{-2}.
	\]
	Therefore, iterating~\cref{lem:small move} at most $m$ times, we obtain $\lambda_{2n}(\tau_n(\x),D)\approx \lambda_{2n}(\x,D)$, and~\eqref{eqn:2n vs n} for $\x$ follows from already established~\eqref{eqn:2n vs n} for $\tau_n(\x)$. 
\end{proof}

\begin{proof}[Proof of \cref{thm:christoffel computation}]
	By~\eqref{eqn:def_lambda}, for two domains satisfying $D_1\subset D_2\subset \R^2$
	\begin{equation}
	\label{eqn:compare}
	\lambda_n(\x,D_1)\le \lambda_n(\x,D_2), \quad \x\in D_2,
	\end{equation}
	and for any ${\T}\in\A$
	\begin{equation}
	\label{eqn:affine}
	\lambda_n({\T}\x,{\T}D)=\lambda_n(\x,D)|\det {\T}|, \quad \x\in D.
	\end{equation}
	By \cref{rem:thm boundry}, it is sufficient to consider the case $\x\in (1-2^{-4}n^{-2})D$. By \cref{thm:geom}, the equivalence~\eqref{eqn:christoffel equivalence} follows from
	\begin{equation}\label{eqn:reduction to geom}
	c_1 n^{-2} L(\x,D)\le \lambda_n(\x,D) \le c_2 n^{-2} U(\x,D).
	\end{equation}

	We begin with the lower bound. Let ${\L}$ be an affine transform such that
	\begin{equation*}
	\x\in {\L}B\subset D \qtq{and} L(\x,D)\le 2 (1-\|{\L}^{-1}\x\|)^{1/2}|\det {\L}|.
	\end{equation*}
	We will show that there exists an affine transform $\wt {\L}$ satisfying
	\begin{equation}\label{eqn:wt T lower}
	\x\in \wt {\L}B\subset D, \quad L(\x,D)\le 4 (1-\|\wt {\L}^{-1}\x\|)^{1/2}|\det \wt {\L}| \qtq{and} 1-\|\wt {\L}^{-1}\x\|\ge 2^{-7}n^{-2}.
	\end{equation}
	Represent ${\L}$ as ${\L}(\cdot)=A(\cdot)+\y$ for some linear map $A$ on $\R^2$ and $\y\in \R^2$. Now define
	\[
	\wt {\L}(\cdot):=\frac1{2(1-2^{-7}n^{-2})}A(\cdot)+\frac{\x+\y}{2}.
	\]
	It is straightforward to check that $\wt {\L}^{-1}\x=(1-2^{-7}n^{-2}) {\L}^{-1}\x$. Due to ${\L}^{-1}\x\in B$, this implies the last inequality in~\eqref{eqn:wt T lower} and $1-\|{\L}^{-1}\x\|\le 1-\|\wt {\L}^{-1}\x\|$. Combining this with $|\det {\L}|=2(1-2^{-7}n^{-2})|\det \wt {\L}|<2|\det \wt {\L}|$, we obtain the upper bound on $L(\x,D)$ in~\eqref{eqn:wt T lower}. Using ${\L}B\subset D$, $\x\in (1-2^{-4}n^{-2})D$ and $\y\in D$, by
	\[
	\wt {\L}B=\frac{1}{2(1-2^{-7}n^{-2})}{\L}B+\frac{1}{2}\x-\frac{2^{-8}n^{-2}}{1-2^{-7}n^{-2}}\y
	\]
	and
	\[
	\frac{1}{2(1-2^{-7}n^{-2})}+\frac{1}{2}(1-2^{-4}n^{-2})+\frac{2^{-8}n^{-2}}{1-2^{-7}n^{-2}}<1,
	\]
	we arrive at $\wt {\L}B\subset D$. (Here we have also used convexity of $D$ and $\zero\in D$.) Now~\eqref{eqn:wt T lower} is completely verified.

	It is known (\cite{Pr}*{(2.3)}) that 
	\begin{equation}\label{eqn:ball behavior}
		\lambda_n(\z,B)\approx n^{-2} (1-\|\z\|)^{1/2}, \quad \z\in (1-2^{-7} n^{-2})B.
	\end{equation}
	So by~\eqref{eqn:compare}, \eqref{eqn:affine}, \eqref{eqn:ball behavior} and~\eqref{eqn:wt T lower},
	\begin{align*}
		\lambda_n(\x,D)  & \ge \lambda_n(\x,\wt {\L}B)=\lambda_n(\wt {\L}^{-1}\x,B)|\det \wt {\L}| \\
		&\approx c n^{-2}(1-\|\wt {\L}^{-1}\x\|)^{1/2} |\det \wt {\L}| \ge c_1n^{-2} L(\x,D),
	\end{align*}
	and the first inequality in~\eqref{eqn:reduction to geom} follows.
	
	Now we turn to the upper bound in~\eqref{eqn:reduction to geom}. Since $(1-2^{-4}n^{-2})^{-1}\x\in D$ and $B\subset D$, by convexity 
	\begin{equation}\label{eqn:ball around x}
	\x+ 2^{-4}n^{-2} B =(1-2^{-4}n^{-2})(1-2^{-4}n^{-2})^{-1}\x+2^{-4}n^{-2}B\subset D.
	\end{equation}
	Let ${\U}$ be an affine transform satisfying $\x\in {\U}(\tfrac12 S)$, $D \subset {\U}S$
	and
	\begin{equation}\label{eqn:UT choice}
		U(\x,D) \ge \tfrac34 (({\U}^{-1}\x)_1({\U}^{-1}\x)_2)^{1/2}|\det {\U}|.
	\end{equation}
	We claim that 
	\begin{equation}\label{eqn:inverse T coordinate lower bound}
		({\U}^{-1}\x)_i\ge 2^{-8}n^{-2}, \quad i=1,2.
	\end{equation}
	Let us prove this for $i=1$, the case $i=2$ is exactly the same. 
	We can assume $({\U}^{-1}\x)_1\le \frac14$, as otherwise~\eqref{eqn:inverse T coordinate lower bound} for $i=1$ is obviously valid. If
	$D\subset {\U}([0,\frac12]\times[0,1])$, then considering $\wt {\U}(x,y):={\U}(\frac x2,y)$, we have $\x\in\wt\U(\tfrac12S)$ and $D\subset\wt\U S$, so by~\eqref{eqn:U def}
	\begin{equation*}
	U(\x,D)\le (({\wt\U}^{-1}\x)_1({\wt\U}^{-1}\x)_2)^{1/2}|\det {\wt\U}|=(2({\U}^{-1}\x)_1({\U}^{-1}\x)_2)^{1/2}\cdot\frac12|\det {\U}|,
	\end{equation*} 
	which, in combination with~\eqref{eqn:UT choice}, means that $U(\x,D)=0$ and the right-hand-side of~\eqref{eqn:UT choice} is zero, which is impossible as $\x$ is from the interior of $D$. This contradiction shows that $D\not\subset {\U}([0,\frac12]\times[0,1])$.
	We note that $\U$, as a member of $\A$, maps parallel lines to parallel lines. Moreover, if $l_i$ is the line $\U(\{t_i\}\times\R)$ and $d(l_i,l_j)$ is the distance between the (parallel) lines $l_i$ and $l_j$, then $\frac{d(l_2,l_3)}{d(l_0,l_1)}=\frac{|t_2-t_3|}{|t_0-t_1|}$ provided $t_0\ne t_1$. In particular, denoting $t_0:=0$, $t_1:=1$, $r:=d(l_0,\U(\{(\U^{-1}\x)_1\}\times\R))$ and $h:=d(l_0,l_1)$,
	we have $({\U}^{-1}\x)_1=\frac{r}{h}$. Observing that the point $\x$ belongs to the line $\U(\{(\U^{-1}\x)_1\}\times\R)$, we obtain that $r\ge 2^{-4}n^{-2}$ due to~\eqref{eqn:ball around x} and $D\subset\U S$. Choose $t_2<t_3$ so that $l_2$ and $l_3$ are the supporting lines to $D$ parallel to $l_0$. The condition $D\not\subset {\U}([0,\frac12]\times[0,1])$ established earlier implies $t_3>\tfrac12$. Also, as $\x\in D$, $t_2\le r\le\tfrac14$. On the other hand,  $D\subset 2B$ yields that $d(l_2,l_3)\le 4$. In summary, $h=\frac{d(l_2,l_3)}{t_3-t_2}\le \tfrac{4}{1/4}=16$. Now~\eqref{eqn:inverse T coordinate lower bound} follows from $({\U}^{-1}\x)_1=\frac{r}{h}$ and the obtained bounds on $r$ and $h$.
	
	Next we adopt~\cite{Di-Pr}*{Theorem~6.3} to our settings. Remark that with $\rho_n(x)=n^{-2}+n^{-1}\sqrt{1-x^2}$, we have $\rho_n(2z-1)\le c n^{-1}\sqrt{z}$ for any $z\in[2^{-8}n^{-2},\tfrac12]$. Therefore, \cite{Di-Pr}*{Theorem~6.3} with $D=S$ and $T\z=\tfrac12(\z+(1,1))$ implies that 
	\begin{equation}\label{eqn:upper for square}
	\lambda_n(\z,S)\le cn^{-2} \sqrt{(\z)_1(\z)_2}, \qtq{for any}
	\z\in[2^{-8} n^{-2},\tfrac12]^2.
	\end{equation}
	
	We complete the proof using~\eqref{eqn:compare}, \eqref{eqn:affine}, \eqref{eqn:upper for square} and~\eqref{eqn:UT choice} as follows:
	\begin{align*}
		\lambda_n(\x,D) & \le \lambda_n(\x,{\U}S)=\lambda_n({\U}^{-1}\x,S)|\det {\U}| \\
		& \le c n^{-2} (({\U}^{-1}\x)_1 ({\U}^{-1}\x)_2)^{1/2}|\det {\U}| \le \frac43 c n^{-2} U(\x,D).
	\end{align*}
\end{proof}

\begin{remark}
	It is possible to prove \cref{thm:geom} and \cref{thm:christoffel computation} simultaneously, % would have been a little shorter as specific structure of the constructions of the corresponding ellipse and the parallelogram would be available.
	but we chose to separate geometric and analytic arguments and show that \cref{thm:christoffel computation} can be obtained from \cref{thm:geom} by relatively short additional work establishing the required properties of the affine transforms nearly attaining the infimum/supremum in~\eqref{eqn:L def} and~\eqref{eqn:U def}. We believe it was important to illustrate that the heart of the matter here is the geometric result \cref{thm:geom} (or, more specifically, \cref{lem:repres D} and \cref{lem:main}). In addition, there may be other applications of \cref{thm:geom} not related to Christoffel functions as this result represents certain duality between near optimal ellipse and parallelogram.
\end{remark}

\begin{remark}
	Let us give several comments regarding the proofs. As already mentioned in \cref{sec:intr}, both ellipse and parallelogram are obtained in a constructive manner. This allows to explicitly construct polynomials nearly attaining the minimum in~\eqref{eqn:def_lambda}. It is interesting that their structure is essentially ``separable'' as they are tensor products of two ``good'' univariate polynomials (constructed in~\cite{Di-Pr}*{Lemma~6.1}) after an affine change of variables. The constructions of ellipses and parallelograms for Cases~1 and~2 in the proof of \cref{thm:geom} are simple and have appeared in some form in our earlier papers. The construction of ellipse in~\cite{Pr-U1} is, in fact, very close to the one we need in this paper. The key ingredient not discovered in~\cite{Pr-U1} is the assumption $f'_+(0)=0$ achieved in \cref{lem:repres D}. Once settings of \cref{lem:repres D} are attained, the required ellipse is found directly through the ``lowest'' parabola whose leading coefficient is defined in~\eqref{eqn:k def}. Two sides of the required parallelogram are the lines supporting to $f$ at the origin and at the point of tangency of the parabola to $f$. This construction of parallelogram  is different from the one in~\cite{Pr} where too few measurements of the domain were used. One of the challenges we had to overcome was to realize that one may have to employ a non-symmetric parallelogram to address symmetric situations (when $f$ is an even function, see also \cref{exm:trapezoid}). 
\end{remark}

\begin{remark}\label{rem:high-dim}
	It is easy to extend the definitions~\eqref{eqn:L def} and~\eqref{eqn:U def} to the higher dimensions, and 
%	\begin{align*}
%	L(\x,D)&=\sup\{(1-\|\L^{-1}\x\|)^{1/2}|\det \L|: \L\in\A,\ \x \in \L B \subset D  \}, \\
%	U(\x,D)&=\inf\{((\U^{-1}\x)_1(\U^{-1}\x)_2)^{1/2}|\det \U|: \U\in\A,\ \x\in \U(\tfrac12 S),\ D \subset \U S \}.
%	\end{align*}
	 we conjecture that the corresponding generalizations of \cref{thm:geom,thm:christoffel computation} are true. While \cref{lem:repres D} is not hard to generalize, \cref{lem:main} is for two dimensions only. One can observe that in the planar case ($d=2$) there is only one parameter ($k$) to define the needed parabola (see~\eqref{eqn:k def}), while for $d>2$ there will be $d-1$ parameters which makes generalization of~\eqref{eqn:k def} and handling the resulting points of tangency much more difficult.
\end{remark}

%[[It may be impractical to assume $B\subset D\subset 2B$. This can be replaced with $B\subset D\subset rB$, same proof, constants depending on $r$.]]
%
%[[explain why enough to compute $k$; also give formulas in terms of $k$ and $\delta$ for CF; should~\eqref{eqn:reduction to geom} be stated separately?]]
%
%[[examples of CF computation? anisotropic example??]]
%
%[[example with trapezoid and parameters not following from piecewise $C^2$]]
%
\begin{example}\label{exm:trapezoid}
	Let $D_a$ be the trapezoid with the vertices $(\pm a,0)$, $(\pm1,1)$, where $a\in(0,\tfrac13]$. Then for an absolute constant $c>0$
	\begin{equation}\label{eqn:trapezoid bound}
	\lambda_n((0,\delta),D_a)\approx n^{-2}\sqrt{\delta(a+\delta)}, \qtq{for} \delta\in[cn^{-2},\tfrac{1}{2}].
	\end{equation}
\end{example}
\begin{proof}
	Let us only provide the main computation and omit other technical details. We follow the proof of \cref{thm:geom} and find $k$ as in the proof of \cref{lem:main}, which requires the smallest $k>0$ such that
	\begin{equation*}
	\frac{x-a}{1-a} \le \frac{\delta}{2}+k x^2 \qtq{for all} x\in[-1,1].
	\end{equation*}
	Then the parabola $y=\frac\delta2+k x^2$ is tangent to the line $y=\frac{x-a}{1-a}$ and one finds $k\approx (\delta+a)^{-1/2}$ (the restrictions on $a$ and $\delta$ imply that the point of tangency $x=\xi$ is in $(0,1)$). Thus $L((0,\delta),D_a)\approx U((0,\delta),D_a) \approx \sqrt{\delta(\delta+a)}$.
\end{proof}

\begin{remark}
	While the trapezoid considered in \cref{exm:trapezoid} is a piecewise $C^2$ domain, one cannot derive~\eqref{eqn:trapezoid bound} from the results of~\cite{Pr-U2}, as the constants there depend on the domain, while in~\eqref{eqn:trapezoid bound} the constants are independent of $a$.
\end{remark}

\section{Application to optimal meshes}\label{sec:mesh}

For a compact set $D\subset\R^d$ with non-empty interior and a continuous function $f$ on $D$, we denote $\|f\|_{C(D)}=\max_{\x\in D}|f(\x)|$. If there exists a sequence $\{Y_n\}_{n\ge1}$ of finite subsets of $D$ such that the cardinality of $Y_n$ is at most $\mu n^d$ while
\begin{equation*}%\label{key}
	\|p\|_{C(D)}\le \nu \|p\|_{C(Y_n)} \qtq{for any} p\in\P_{n,d},
\end{equation*}
where $\mu,\nu>0$ are constants depending only on $D$, then $D$ {\it possesses optimal polynomial meshes}. Note that the dimension of the space $\P_{n,d}$ is of order $n^d$, which is the reason for calling such sets {\it optimal} meshes. It was conjectured by Kroo~\cite{Kr11} that any convex compact set possesses optimal polynomial meshes. Until recently, this was established only for various classes of domains, namely, for convex polytopes in~\cite{Kr11}, for $C^\alpha$ star-like domains with $\alpha>2-\frac2d$ in~\cite{Kr13}, for certain extension of $C^2$ domains in~\cite{Pi}. Finally, in~\cite{Kr19} Kroo settled the conjecture in affirmative for $d=2$ proving existence of optimal polynomial meshes for arbitrary planar convex domains using certain tangential Bernstein inequality. For $d\ge3$ the question is still open. Here we show another proof of the conjecture for $d=2$ using a different technique based on Christoffel functions and an application of Tchakaloff's theorem. 

We will employ the connection between Christoffel functions, positive quadrature formulas and polynomial meshes established recently in a nice lemma from the paper~\cite{Bo-Vi} by Bos and Vianello which we will now state in somewhat smaller generality and using our notations.
\begin{lemma}[\cite{Bo-Vi}*{Lemma~2.2}]\label{lem:Bo-Vi}
	Suppose $X=\{\x^{(1)},\dots,\x^{(s)}\}\subset D$ are the nodes of a positive quadrature formula precise for $\P_{4n,d}$, i.e. there exist weights $w_i>0$, $i=1,\dots,s$, such that
\begin{equation}\label{eqn:positive quadrature}
	\int_D p(\x)\,d\x =\sum_{i=1}^{s}w_i p(\x^{(i)}) \quad \forall p\in\P_{4n,d}.
\end{equation}
	Then for any $\bxi\in D$
	\[
	|p(\bxi)|\le \sqrt{\frac{\lambda_n(\bxi,D)}{\lambda_{2n}(\bxi,D)}}\,\|p\|_{C(X)}
	 \quad \forall p\in\P_{n,d}.
	\]
\end{lemma}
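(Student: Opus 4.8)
The plan is to prove the Bos--Vianello lemma by exploiting the extremal characterization~\eqref{eqn:def_lambda} of the Christoffel function together with the positive quadrature formula~\eqref{eqn:positive quadrature}. The key observation is that for $p\in\P_{n,d}$, the product $p\cdot q$ for a well-chosen $q\in\P_{n,d}$ lands in $\P_{2n,d}$, so the quadrature formula applies to $p^2$ and to products built from $p$. I would fix $\bxi\in D$ and let $q_\bxi\in\P_{n,d}$ be a near-minimizer in~\eqref{eqn:def_lambda} for $\lambda_n(\bxi,D)$, normalized so that $q_\bxi(\bxi)=1$ and $\|q_\bxi\|_{L^2(D)}^2=\lambda_n(\bxi,D)$.

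**The main step** is to consider, for an arbitrary $p\in\P_{n,d}$ and the fixed $\bxi$, the polynomial $p\,q_\bxi\in\P_{2n,d}$ and apply~\eqref{eqn:def_lambda} to the measure $\lambda_{2n}$ at the point $\bxi$. Since $(p\,q_\bxi)(\bxi)=p(\bxi)$, the extremal definition~\eqref{eqn:def_lambda} gives
\[
p(\bxi)^2 \,\lambda_{2n}(\bxi,D)\le \|p\,q_\bxi\|_{L^2(D)}^2=\int_D p(\x)^2 q_\bxi(\x)^2\,d\x.
\]
Now I would bound the right-hand side crudely by $\|p\|_{C(D)}^2\int_D q_\bxi^2\,d\x=\|p\|_{C(D)}^2\,\lambda_n(\bxi,D)$; but to get $\|p\|_{C(X_{2n})}$ rather than $\|p\|_{C(D)}$, instead I would apply the positive quadrature formula~\eqref{eqn:positive quadrature} to the integrand $p^2 q_\bxi^2\in\P_{2n,d}$ — wait, this has degree $4n$, so the direct route needs care. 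The cleaner approach is: apply~\eqref{eqn:positive quadrature} to $q_\bxi^2\in\P_{2n,d}$ to write $\int_D q_\bxi^2\,d\x=\sum_i w_i q_\bxi(\x^{(i)})^2$, and separately bound $\int_D p^2 q_\bxi^2$ by factoring out $\max_i|p(\x^{(i)})|^2$ only over the nodes.

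**The hard part** is precisely reconciling the degree budget: one cannot simultaneously square $p$ and multiply by $q_\bxi^2$ inside a degree-$2n$ quadrature rule. The correct maneuver, which I expect to be the crux, is to bound
\[
\int_D p(\x)^2 q_\bxi(\x)^2\,d\x\le \|p\|_{C(X_{2n})}^2\sum_{i=1}^s w_i\,q_\bxi(\x^{(i)})^2
\]
\emph{only after} recognizing that $p^2q_\bxi^2$ is \emph{not} in $\P_{2n,d}$, so this step must instead route through a pointwise bound on the nodes combined with the quadrature for $q_\bxi^2$ alone; establishing this inequality rigorously (rather than the naive factoring) is where the argument must be made precise, presumably by writing $\int_D(pq_\bxi)^2\le\max_i p(\x^{(i)})^2\cdot\sum_i w_i q_\bxi(\x^{(i)})^2$ via the quadrature applied to $q_\bxi^2$ together with $(pq_\bxi)^2\le\|p\|_{C(X_{2n})}^2 q_\bxi^2$ evaluated at nodes.

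**To finish**, I would chain the two displays: $p(\bxi)^2\lambda_{2n}(\bxi,D)\le\|p\|_{C(X_{2n})}^2\sum_i w_i q_\bxi(\x^{(i)})^2=\|p\|_{C(X_{2n})}^2\,\lambda_n(\bxi,D)$, using that $\sum_i w_i q_\bxi(\x^{(i)})^2=\int_D q_\bxi^2\,d\x=\lambda_n(\bxi,D)$ by exactness of the quadrature on $q_\bxi^2\in\P_{2n,d}$. Dividing by $\lambda_{2n}(\bxi,D)$ and taking square roots yields
\[
|p(\bxi)|\le\sqrt{\frac{\lambda_n(\bxi,D)}{\lambda_{2n}(\bxi,D)}}\,\|p\|_{C(X_{2n})},
\]
as claimed. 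The elegance of the lemma lies in how the exactness degree $2n$ is split — degree $n$ for the test polynomial $p$ and degree $n$ for the Christoffel extremal $q_\bxi$ — so that the quadrature precisely ``sees'' $q_\bxi^2$ while the uniform norm over the nodes absorbs $p$.
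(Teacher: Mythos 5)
Your setup and main step are exactly the paper's: take $q\in\P_{n,d}$ attaining the minimum in \eqref{eqn:def_lambda} for $\lambda_n(\bxi,D)$, normalized by $q(\bxi)=1$ and $\int_D q^2\,d\x=\lambda_n(\bxi,D)$, form $r:=pq\in\P_{2n,d}$, and deduce $p^2(\bxi)=r^2(\bxi)\le\lambda_{2n}^{-1}(\bxi,D)\int_D r^2\,d\x$ from \eqref{eqn:def_lambda}. The paper then finishes in one stroke by applying the quadrature \eqref{eqn:positive quadrature} to $r^2$ itself and exploiting positivity of the weights:
\[
\int_D r^2\,d\x=\sum_{i=1}^{s}w_i\,p^2(\x^{(i)})\,q^2(\x^{(i)})\le\|p\|_{C(X_{2n})}^2\sum_{i=1}^{s}w_i\,q^2(\x^{(i)})=\|p\|_{C(X_{2n})}^2\,\lambda_n(\bxi,D),
\]
the last equality being the quadrature applied to $q^2\in\P_{2n,d}$.

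Where your write-up genuinely breaks down is the step you yourself flag as ``the hard part.'' The inequality $\int_D(pq)^2\,d\x\le\|p\|_{C(X_{2n})}^2\sum_i w_i\,q^2(\x^{(i)})$ cannot be obtained from ``the quadrature applied to $q^2$ together with $(pq)^2\le\|p\|_{C(X_{2n})}^2q^2$ evaluated at nodes'': a pointwise bound that holds only at the $s$ nodes gives no control whatsoever on an integral over $D$, and the statement $|p(\x)|\le\|p\|_{C(X_{2n})}$ for all $\x\in D$ is precisely the conclusion of the lemma, so it cannot be invoked as a tool. The only way to convert $\int_D r^2\,d\x$ into node data is to apply the quadrature to $r^2$, which is what the paper does. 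Your underlying worry is not unreasonable --- $r^2$ has degree $4n$, so that application uses exactness beyond the stated $\P_{2n,d}$ --- but the remedy is not to split the degree budget between $p$ and $q$; it is either to demand exactness for $\P_{4n,d}$ (Tchakaloff still supplies such a formula with $O(n^d)$ nodes, so \cref{prop:reduction} and the mesh application are unaffected) or to run the extremal characterization of $\lambda_{2n}$ with respect to the discrete measure $\sum_i w_i\delta_{\x^{(i)}}$, for which no exactness is needed at all. As written, your proof asserts its central inequality without a valid justification and is therefore incomplete.
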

For completeness, let us provide a quick proof.
\begin{proof}
	Fix $\bxi\in D$. Let $q\in\P_{n,d}$ be a polynomial attaining the minimum in~\eqref{eqn:def_lambda}, i.e., 
\begin{equation}\label{eqn:q}
	q(\bxi)=1 \qtq{and} \int_D q^2(\x)\,d\x = \lambda_n(\bxi,D).
\end{equation}
	For any $p\in\P_{n,d}$, define $r(\x):=p(\x)q(\x)$, $\x\in D$, then $r\in\P_{2n,d}$. Further, by~\eqref{eqn:def_lambda}
\begin{equation}\label{eqn:p bound}
	p^2(\bxi)=r^2(\bxi)\le \lambda_{2n}^{-1}(\bxi,D) \int_D r^2(\x)\,d\x,
\end{equation}
	while by~\eqref{eqn:positive quadrature} and~\eqref{eqn:q}
	\[
	\int_D r^2(\x)\,d\x = \sum_{i=1}^{s} w_i p^2(\x^{(i)}) q^2(\x^{(i)})
	\le \|p\|^2_{C(X)} \sum_{i=1}^{s} w_i q^2(\x^{(i)})
	=  \|p\|^2_{C(X)} \lambda_n(\bxi,D),
	\]
	which, in combination with~\eqref{eqn:p bound}, is the required inequality.
\end{proof}

%[[fix notation $x_i$ to avoid confusion with $i$-th coordinate]]
%

Existence of the required positive quadrature formula~\eqref{eqn:positive quadrature} with $s\le \dim(\P_{4n,d})$ is well-known. For the Lebesgue measure, which is our settings, this was originally proved by Tchakaloff~\cite{Tch}. The result has been generalized in various directions, see, for example~\cite{Pu} and~\cite{DPTT}*{Theorem~4.1}. 

By Tchakaloff's theorem and \cref{lem:Bo-Vi}, we obtain the following.
\begin{proposition}\label{prop:reduction}
	Suppose $D\subset\R^d$ is a compact set with non-empty interior satisfying
	\[
	\lambda_n(\x,D)\le c(D) \lambda_{2n}(\x,D) \qtq{for any} \x\in D
	\]
	with $c(D)>0$ independent of $n$ and $\x$. Then $D$ possesses optimal polynomial meshes.
\end{proposition}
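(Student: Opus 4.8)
The plan is to combine Tchakaloff's theorem with \cref{lem:Bo-Vi} and \cref{cor:2n vs n}, which is precisely the corollary that was highlighted earlier as ``crucial in the next section for existence of optimal polynomial meshes.'' First I would invoke Tchakaloff's theorem (cited just above the statement) to produce, for each $n$, a positive quadrature formula~\eqref{eqn:positive quadrature} exact on $\P_{2n,d}$ supported on a node set $X_{2n}=\{\x^{(1)},\dots,\x^{(s)}\}\subset D$ with $s\le \dim(\P_{2n,d})=\binom{2n+d}{d}$. I would then set $Y_n:=X_{2n}$ and verify the two defining properties of an optimal polynomial mesh: the cardinality bound and the sampling inequality.

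\medskip

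For the cardinality, I note that $\binom{2n+d}{d}$ is a polynomial in $n$ of degree $d$, so $|Y_n|=s\le \binom{2n+d}{d}\le \mu n^d$ for a constant $\mu$ depending only on $d$ (hence only on $D$), at least for $n$ large; small $n$ can be absorbed into the constant. For the sampling inequality, I would apply \cref{lem:Bo-Vi} with these nodes: for every $p\in\P_{n,d}$ and every $\bxi\in D$,
\[
|p(\bxi)|\le \sqrt{\frac{\lambda_n(\bxi,D)}{\lambda_{2n}(\bxi,D)}}\,\|p\|_{C(Y_n)}.
\]
Taking the maximum over $\bxi\in D$ on the left gives $\|p\|_{C(D)}\le \bigl(\sup_{\bxi\in D}\sqrt{\lambda_n(\bxi,D)/\lambda_{2n}(\bxi,D)}\bigr)\|p\|_{C(Y_n)}$.

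\medskip

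The decisive step is to bound the supremum of the ratio $\lambda_n(\bxi,D)/\lambda_{2n}(\bxi,D)$ by a constant independent of $n$ and $\bxi$. This is exactly where the hypothesis $\lambda_n(\x,D)\le c(D)\,\lambda_{2n}(\x,D)$ enters: it yields $\lambda_n(\bxi,D)/\lambda_{2n}(\bxi,D)\le c(D)$ uniformly, so one may take $\nu:=\sqrt{c(D)}$, a constant depending only on $D$. Combining this with the displayed inequality gives $\|p\|_{C(D)}\le \nu\,\|p\|_{C(Y_n)}$ for all $p\in\P_{n,d}$, completing the verification that $D$ possesses optimal polynomial meshes. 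In the planar case, \cref{cor:2n vs n} guarantees that such a constant $c(D)$ always exists, so the hypothesis is automatically satisfied for arbitrary planar convex $D$; here, however, the proposition is stated abstractly and I would simply use the hypothesis directly.

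\medskip

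I anticipate that no step is genuinely hard: the only technical care needed is the bookkeeping that the constants $\mu$ and $\nu$ depend only on $D$ (through $d$ and $c(D)$) and not on $n$, and that the cardinality estimate $\binom{2n+d}{d}\le \mu n^d$ is handled uniformly in $n$. The conceptual content has already been front-loaded into \cref{lem:Bo-Vi} and the quoted Tchakaloff theorem, so the proof is essentially an assembly of these ingredients.
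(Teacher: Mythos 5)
Your proof is correct and follows exactly the route the paper intends: the paper itself derives \cref{prop:reduction} in one line from Tchakaloff's theorem and \cref{lem:Bo-Vi}, taking $Y_n=X_{2n}$ with $|Y_n|\le\dim(\P_{2n,d})\le\mu n^d$ and $\nu=\sqrt{c(D)}$. Your write-up just supplies the same bookkeeping explicitly (and the cardinality bound in fact holds uniformly for all $n\ge1$ since $\binom{2n+d}{d}\le\frac{(2+d)^d}{d!}n^d$, so no case split for small $n$ is needed).
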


This proposition in combination with \cref{cor:2n vs n} immediately implies existence of optimal polynomial meshes for arbitrary planar convex domains.

\begin{remark}
%	Extending \cref{cor:2n vs n} to higher dimensions will confirm Kroo's conjecture for higher dimensions.
	 Our proof of \cref{cor:2n vs n} from \cref{thm:christoffel computation} readily transfers to the higher dimensions. Therefore, generalization of \cref{thm:christoffel computation} to the higher dimensions (see \cref{rem:high-dim}) would imply existence of optimal polynomial meshes for arbitrary convex bodies, i.e., would confirm Kroo's conjecture for $d>2$. However, it might be a more accessible task to generalize only \cref{cor:2n vs n} which is a much weaker statement than \cref{thm:christoffel computation}.
\end{remark}

\begin{remark}
	We would also like to comment about similarities and differences of the proofs of existence of optimal polynomial meshes in arbitrary planar convex bodies from this work and from~\cite{Kr19}. A very important part of both proofs is consideration of certain parabolas inside the domain. In our proof we were able to ``localize'' the problem and work with a fixed interior point; ``global'' part of the argument was delegated to Tchakaloff's theorem and \cref{lem:Bo-Vi}. In~\cite{Kr19}, a maximal function was used to prove a ``global'' tangential Bernstein inequality. While smoothing of the boundary was needed in~\cite{Kr19}, we managed to avoid this due to \cref{lem:small move}.
\end{remark}

\begin{remark}
	In fact, a stronger $\eps$-version of the existence of optimal meshes was established in~\cite{Kr19}. Namely, for every planar convex body $D$ and every $\eps>0$ there exists a sequence $\{Y_n\}_{n\ge1}$ of finite subsets of $D$ such that the cardinality of $Y_n$ is at most $4\cdot10^5\eps^{-2} n^2$ while
	\begin{equation*}
	\|p\|_{C(D)}\le (1+\eps) \|p\|_{C(Y_n)} \qtq{for any} p\in\P_{n,2}.
	\end{equation*}
	It is not possible to achieve the result of this type as an application of \cref{thm:christoffel computation} since the nature of geometric comparison technique used here does not allow to get the constants in the equivalence~\eqref{eqn:christoffel equivalence} arbitrarily close to $1$.
%	%To achieve this, one simply needs to consider $\lambda_{mn}(\bxi,D)$ in place of $\lambda_{2n}(\bxi,D)$ for a sufficiently large $m=m(\eps)$.
%	Let us outline the key differences starting with \cref{lem:small move}. For any fixed $\gamma\in(0,1]$, minor changes in the proof (mostly consistent of replacing $2$ with $1+\gamma$ where appropriate) lead to the following more precise conclusion
%	\[
%	\left(1-\frac{\gamma^2}{2(1+\gamma)^3}\right)^2\le \frac{\lambda_n(\mu\x,D)}{\lambda_n(\x,D)} \le (1+\gamma)^2, \quad \mu\in[1-\tfrac{\gamma^2}{2(1+\gamma)^3n^2},1]
%	\]
%	under the same hypotheses as in  \cref{lem:small move}. Observe that these bounds on $\frac{\lambda_n(\mu\x,D)}{\lambda_n(\x,D)}$ approach $1$ as $\gamma\to0+$. Therefore, for arbitrary positive integer $l$, following the proof of \cref{cor:2n vs n} (changing $2n$ to $(l+1)n$ and $n$ to $ln$) we can establish that  there exists a constant $c_1(l)$ satisfying $c_1(l)\to1$ as $l\to\infty$  such that
%	\[
%	\lambda_{ln}(\x,D)\le c_1(l) \lambda_{(l+1)n}(\x,D)
%	\]
%	under the same hypotheses as in \cref{cor:2n vs n}. The last step is to apply \cref{lem:Bo-Vi} with $m=ln$ for sufficiently large $l$.
\end{remark}

\begin{remark}
	Note that Tchakaloff's points can be found numerically, see e.g. \cite{Da}.
\end{remark}

{\bf Acknowledgements.} I would like to thank the anonymous referees for their helpful comments which led to correction of several inaccuracies.

\begin{bibsection}
\begin{biblist}
	
	%\bib{Bo-elal}{article}{
	%   author={Bos, L.},
	%   author={Della Vecchia, B.},
	%   author={Mastroianni, G.},
	%   title={On the asymptotics of Christoffel functions for centrally
	%   symmetric weight functions on the ball in ${\mathbf R}^d$},
	%   booktitle={Proceedings of the Third International Conference on
	%   Functional Analysis and Approximation Theory, Vol. I (Acquafredda di
	%   Maratea, 1996)},
	%   journal={Rend. Circ. Mat. Palermo (2) Suppl.},
	%   number={52, Vol. I},
	%   date={1998},
	%   pages={277--290},
	% %  issn={1592-9531},
	%%   review={\MR{1644555}},
	%}
	%

\bib{Bo-Vi}{article}{
	author={Bos, Len},
	author={Vianello, Marco},
	title={Tchakaloff polynomial meshes},
	journal={Ann. Polon. Math.},
	volume={122},
	date={2019},
	number={3},
	pages={221--231},
%	issn={0066-2216},
%	review={\MR{4002264}},
%	doi={10.4064/ap181031-26-3},
}

%	\bib{Bo-Er}{book}{
%		author={Borwein, Peter},
%		author={Erd\'{e}lyi, Tam\'{a}s},
%		title={Polynomials and polynomial inequalities},
%		series={Graduate Texts in Mathematics},
%		volume={161},
%		publisher={Springer-Verlag, New York},
%		date={1995},
%		pages={x+480},
%		%	isbn={0-387-94509-1},
%		%	review={\MR{1367960}},
%		%	doi={10.1007/978-1-4612-0793-1},
%	}
%	
%	
	\bib{Co-Da-Le}{article}{
		author={Cohen, Albert},
		author={Davenport, Mark A.},
		author={Leviatan, Dany},
		title={On the stability and accuracy of least squares approximations},
		journal={Found. Comput. Math.},
		volume={13},
		date={2013},
		number={5},
		pages={819--834},
		%	issn={1615-3375},
		%	review={\MR{3105946}},
		%	doi={10.1007/s10208-013-9142-3},
	}

	\bib{Co-Gi}{article}{
		author={Cohen, Albert},
		author={Migliorati, Giovanni},
		title={Optimal weighted least-squares methods},
		journal={SMAI J. Comput. Math.},
		volume={3},
		date={2017},
		pages={181--203},
		%	issn={2426-8399},
		%	review={\MR{3716755}},
		%	doi={10.5802/smai-jcm.24},
	}

\bib{DPTT}{article}{
	author={Dai, F.},
	author={Prymak, A.},
	author={Temlyakov, V. N.},
	author={Tikhonov, S. Yu.},
	title={Integral norm discretization and related problems},
	language={Russian, with Russian summary},
	journal={Uspekhi Mat. Nauk},
	volume={74},
	date={2019},
	number={4(448)},
	pages={3--58},
%	issn={0042-1316},
	translation={
		journal={Russian Math. Surveys},
		volume={74},
		date={2019},
		number={4},
		pages={579--630},
		issn={0036-0279},
	},
%	review={\MR{3985711}},
%	doi={10.4213/rm9892},
}

	%\bib{Da-Xu}{book}{
	%	author={Dai, Feng},
	%	author={Xu, Yuan},
	%	title={Approximation theory and harmonic analysis on spheres and balls},
	%	series={Springer Monographs in Mathematics},
	%	publisher={Springer},
	%	place={New York},
	%	date={2013},
	%	pages={xviii+440},
	%	%   isbn={978-1-4614-6659-8},
	%	%   isbn={978-1-4614-6660-4},
	%	%   review={\MR{3060033}},
	%	%   doi={10.1007/978-1-4614-6660-4},
	%}

	\bib{Da}{article}{
		author={Davis, Philip J.},
		title={A construction of nonnegative approximate quadratures},
		journal={Math. Comp.},
		volume={21},
		date={1967},
		pages={578--582},
	}
	
	\bib{Di-Pr}{article}{
		author={Ditzian, Z.},
		author={Prymak, A.},
		title={On Nikol'skii inequalities for domains in $\mathbb{R}^d$},
		journal={Constr. Approx.},
		volume={44},
		date={2016},
		number={1},
		pages={23--51},
		%   issn={0176-4276},
		%   review={\MR{3514403}},
		%   doi={10.1007/s00365-016-9335-5},
	}

%	\bib{Ja}{article}{
%		author={Jackson, Dunham},
%		title={On the application of Markoff's theorem to problems of
%			approximation in the complex domain},
%		journal={Bull. Amer. Math. Soc.},
%		volume={37},
%		date={1931},
%		number={12},
%		pages={883--890},
%		%	issn={0002-9904},
%		%	review={\MR{1562284}},
%		%	doi={10.1090/S0002-9904-1931-05293-9},
%	}
%	

\bib{Kr11}{article}{
	author={Kro\'{o}, Andr\'{a}s},
	title={On optimal polynomial meshes},
	journal={J. Approx. Theory},
	volume={163},
	date={2011},
	number={9},
	pages={1107--1124},
%	issn={0021-9045},
%	review={\MR{2832746}},
%	doi={10.1016/j.jat.2011.03.007},
}

\bib{Kr13}{article}{
	author={Kro\'{o}, Andr\'{a}s},
	title={Bernstein type inequalities on star-like domains in $\Bbb{R}^d$
		with application to norming sets},
	journal={Bull. Math. Sci.},
	volume={3},
	date={2013},
	number={3},
	pages={349--361},
%	issn={1664-3607},
%	review={\MR{3128035}},
%	doi={10.1007/s13373-013-0033-3},
}
	
	\bib{Kr}{article}{
		author={Kro{\'o}, Andr{\'a}s},
		title={Christoffel functions on convex and starlike domains in $\mathbb{R}^d$},
		journal={J. Math. Anal. Appl.},
		volume={421},
		date={2015},
		number={1},
		pages={718--729},
		%   issn={0022-247X},
		%   review={\MR{3250504}},
		%   doi={10.1016/j.jmaa.2014.07.037},
	}
	
%	
%	\bib{Kr15}{article}{
%		author={Kro\'{o}, A.},
%		title={Multivariate ``needle'' polynomials with application to norming
%			sets and cubature formulas},
%		journal={Acta Math. Hungar.},
%		volume={147},
%		date={2015},
%		number={1},
%		pages={46--72},
%		%	issn={0236-5294},
%		%	review={\MR{3391513}},
%		%	doi={10.1007/s10474-015-0507-8},
%	}
%	
%	
%	
%	\bib{Kr16}{article}{
%		author={Kro\'{o}, A.},
%		title={Multivariate fast decreasing polynomials},
%		journal={Acta Math. Hungar.},
%		volume={149},
%		date={2016},
%		number={1},
%		pages={101--119},
%		%	issn={0236-5294},
%		%	review={\MR{3498951}},
%		%	doi={10.1007/s10474-016-0593-2},
%	}
%	
	
	\bib{Kr19}{article}{
		author={Kro\'{o}, Andr\'{a}s},
		title={On the existence of optimal meshes in every convex domain on the
			plane},
		journal={J. Approx. Theory},
		volume={238},
		date={2019},
		pages={26--37},
%		issn={0021-9045},
%		review={\MR{3912665}},
%		doi={10.1016/j.jat.2017.02.004},
	}
	
	%
	%\bib{Kr-Lu}{article}{
	%   author={Kro\'o, A.},
	%   author={Lubinsky, D. S.},
	%   title={Christoffel functions and universality in the bulk for
	%   multivariate orthogonal polynomials},
	%   journal={Canad. J. Math.},
	%   volume={65},
	%   date={2013},
	%   number={3},
	%   pages={600--620},
	%%   issn={0008-414X},
	%%   review={\MR{3043043}},
	%%   doi={10.4153/CJM-2012-016-x},
	%}

%	\bib{Lu}{article}{
%		author={Lubinsky, Doron S.},
%		title={A new approach to universality limits involving orthogonal
%			polynomials},
%		journal={Ann. of Math. (2)},
%		volume={170},
%		date={2009},
%		number={2},
%		pages={915--939},
%		%	issn={0003-486X},
%		%	review={\MR{2552113}},
%		%	doi={10.4007/annals.2009.170.915},
%	}

	%\bib{Ma-To}{article}{
	%	author={Mastroianni, Giuseppe},
	%	author={Totik, Vilmos},
	%	title={Weighted polynomial inequalities with doubling and $A_\infty$
	%		weights},
	%	journal={Constr. Approx.},
	%	volume={16},
	%	date={2000},
	%	number={1},
	%	pages={37--71},
	%%	issn={0176-4276},
	%%	review={\MR{1848841}},
	%%	doi={10.1007/s003659910002},
	%}

	\bib{Ne}{article}{
		author={Nevai, Paul},
		title={G\'{e}za Freud, orthogonal polynomials and Christoffel functions. A
			case study},
		journal={J. Approx. Theory},
		volume={48},
		date={1986},
		number={1},
		pages={3--167},
		%	issn={0021-9045},
		%	review={\MR{862231}},
		%	doi={10.1016/0021-9045(86)90016-X},
	}

\bib{Pi}{article}{
	author={Piazzon, Federico},
	title={Optimal polynomial admissible meshes on some classes of compact
		subsets of $\Bbb{R}^d$},
	journal={J. Approx. Theory},
	volume={207},
	date={2016},
	pages={241--264},
%	issn={0021-9045},
%	review={\MR{3494232}},
%	doi={10.1016/j.jat.2016.02.015},
}

	\bib{Pr}{article}{
		author={Prymak, A.},
		title={Upper estimates of Christoffel function on convex domains},
		journal={J. Math. Anal. Appl.},
		volume={455},
		date={2017},
		number={2},
		pages={1984--2000},
		%	issn={0022-247X},
		%	review={\MR{3671264}},
		%	doi={10.1016/j.jmaa.2017.06.079},
	}

	\bib{Pr-U1}{article}{
		author={Prymak, A.},
		author={Usoltseva, O.},
		title={Pointwise behavior of Christoffel function on planar convex domains},
		conference={title={in: Topics in classical and modern analysis. In memory of Yingkang Hu}, series={Applied and Numerical Harmonic Analysis}},
		book={publisher={Birkh\"auser}},
		date={2019},
		pages={293--302},
		eprint={arXiv:math.CA/1709.10509},
	}
	
\bib{Pr-U2}{article}{
  author={Prymak, A.},
author={Usoltseva, O.},
title={Christoffel functions on planar domains with piecewise smooth
	boundary},
journal={Acta Math. Hungar.},
volume={158},
date={2019},
number={1},
pages={216--234},
	}

\bib{Pu}{article}{
	author={Putinar, Mihai},
	title={A note on Tchakaloff's theorem},
	journal={Proc. Amer. Math. Soc.},
	volume={125},
	date={1997},
	number={8},
	pages={2409--2414},
%	issn={0002-9939},
%	review={\MR{1389533}},
%	doi={10.1090/S0002-9939-97-03862-8},
}

\bib{Ro}{book}{
	author={Roberts, A. Wayne},
	author={Varberg, Dale E.},
	title={Convex functions},
	note={Pure and Applied Mathematics, Vol. 57},
	publisher={Academic Press [A subsidiary of Harcourt Brace Jovanovich,
		Publishers], New York-London},
	date={1973},
	pages={xx+300},
%	review={\MR{0442824}},
}

	\bib{Si}{article}{
		author={Simon, Barry},
		title={The Christoffel-Darboux kernel},
		conference={
			title={Perspectives in partial differential equations, harmonic
				analysis and applications},
		},
		book={
			series={Proc. Sympos. Pure Math.},
			volume={79},
			publisher={Amer. Math. Soc., Providence, RI},
		},
		date={2008},
		pages={295--335},
		%	review={\MR{2500498}},
		%	doi={10.1090/pspum/079/2500498},
	}

\bib{Sc}{book}{
	author={Schneider, Rolf},
	title={Convex bodies: the Brunn-Minkowski theory},
	series={Encyclopedia of Mathematics and its Applications},
	volume={151},
	edition={Second expanded edition},
	publisher={Cambridge University Press, Cambridge},
	date={2014},
	pages={xxii+736},
%	isbn={978-1-107-60101-7},
%	review={\MR{3155183}},
}

\bib{Tch}{article}{
	author={Tchakaloff, Vladimir},
	title={Formules de cubatures m\'{e}caniques \`a coefficients non n\'{e}gatifs},
	language={French},
	journal={Bull. Sci. Math. (2)},
	volume={81},
	date={1957},
	pages={123--134},
%	issn={0007-4497},
%	review={\MR{94632}},
}

	\bib{Xu}{article}{
		author={Xu, Yuan},
		title={Asymptotics for orthogonal polynomials and Christoffel functions
			on a ball},
		journal={Methods Appl. Anal.},
		volume={3},
		date={1996},
		number={2},
		pages={257--272},
		%   issn={1073-2772},
		%   review={\MR{1420369}},
		%   doi={10.4310/MAA.1996.v3.n2.a6},
	}
	
\end{biblist}
\end{bibsection}

\end{document}